\shorttitle}
\@nx\MakeUppercase{\the\toks@}}
\patchcmd\@settitle{\uppercasenonmath\@title}{\Large}{}{}
\authors}
\newtheorem{theorem}{Theorem}[section]
\newtheorem{corollary}{Corollary}[section]
\newtheorem{proposition}{Proposition}[section]
\newtheorem{lemma}{Lemma}[section]
\newtheorem{remark}{Remark}[section]
\numberwithin{equation}{section}
  \newtheorem{thqt}{Theorem}
  \newtheorem{lemqt}[thqt]{Lemma}
  \renewcommand{\thethqt}{\Alph{thqt}}
\begin{document}
\address{$^{[1_a]}$ University of Monastir, Faculty of Economic Sciences and Management of Mahdia, Mahdia, Tunisia}
\address{$^{[1_b]}$ Laboratory Physics-Mathematics and Applications (LR/13/ES-22), Faculty of Sciences of Sfax, University of Sfax, Sfax, Tunisia}
\email{\url{kais.feki@fsegma.u-monastir.tn}\,;\,\url{kais.feki@hotmail.com}}

\subjclass[2010]{46C05, 47A05, 47B65.}

\keywords{Positive operator, semi-inner product, numerical radius, commutator, anticommutator.}

\date{\today}
\author[Kais Feki] {\Large{Kais Feki}$^{1_{a,b}}$}
\title[Improved inequalities related to the $A$-numerical radius for commutators of operators]{Improved inequalities related to the $A$-numerical radius for commutators of operators}

\maketitle

\begin{abstract}
Let $A$ be a positive bounded linear operator on a complex Hilbert space $\mathcal{H}$ and $\mathbb{B}_{A}(\mathcal{H})$ be the subspace of all operators which admit $A$-adjoints operators. In this paper, we establish some inequalities involving the commutator and the anticommutator of operators in semi-Hilbert spaces, i.e. spaces generated by positive semidefinite sesquilinear forms. Mainly, among other inequalities, we prove that for $T, S\in\mathbb{B}_{A}(\mathcal{H})$ we have
\begin{align*}
\omega_A(TS \pm ST) \leq 2\sqrt{2}\min\Big\{f_A(T,S), f_A(S,T) \Big\},
\end{align*}
where
$$f_A(X,Y)=\|Y\|_A\sqrt{\omega_A^2(X)-\frac{\left|\,\left\|\frac{X+X^{\sharp_A}}{2}\right\|_A^2-\left\|\frac{X-X^{\sharp_A}}{2i}\right\|_A^2\right|}{2}}.$$
This covers and improves the well-known inequalities of Fong and Holbrook. Here $\omega_A(\cdot)$ and $\|\cdot\|_A$ are the $A$-numerical radius and the $A$-operator seminorm of semi-Hilbert space operators, respectively and $X^{\sharp_A}$ denotes a distinguished $A$-adjoint operator of $X$.
\end{abstract}
\section{Introduction}
Let $\big(\mathcal{H}, \langle\cdot, \cdot\rangle\big)$ be a complex Hilbert space endowed with the norm $\|\cdot\|$. Let $\mathbb{B}(\mathcal{H})$ stand for the $C^*$-algebra of all bounded linear operators on $\mathcal{H}$ and $I$ denote the identity operator on $\mathcal{H}$. For every operator $T\in\mathbb{B}(\mathcal{H})$ its range is denoted by $\mathcal{R}(T)$, its null space by $\mathcal{N}(T)$, and its adjoint by $T^*$. An operator $T\in \mathbb{B}(\mathcal{H})$ is called positive if $\langle Tx, x\rangle\geq0$ for all $x\in{\mathcal H }$, and we then write $T\geq 0$. The square root of every positive operator $T$ is denoted by $T^{1/2}$. If $T\geq 0$, then the absolute value of $T$ is given by $|T|:=(T^*T)^{1/2}$. If $\mathcal{S}$ is a given linear subspace of $\mathcal{H}$, then $\overline{\mathcal{S}}$ stands for its closure in the norm topology of $\mathcal{H}$. Moreover, the orthogonal projection onto a closed linear subspace $\mathcal{S}$ of $\mathcal{H}$ is denoted by $P_{\mathcal{S}}$. Throughout this article, we suppose that $A\in\mathbb{B}(\mathcal{H})$ is a positive operator, which
induces the following semi-inner product
$$\langle\cdot,\cdot\rangle_{A}:\mathcal{H}\times \mathcal{H}\longrightarrow\mathbb{C},\;(x,y)\longmapsto \langle x, y\rangle_{A}:=\langle Ax, y\rangle=\langle A^{1/2}x, A^{1/2}y\rangle.$$
The seminorm induced by ${\langle \cdot, \cdot\rangle}_{A}$ is given by ${\|x\|}_A = \sqrt{{\langle x, x\rangle}_{A}}$ for every $x\in\mathcal{H}$. Clearly ${\|x\|}_{A} = 0$ if and only if $x\in \mathcal{N}(A)$ which implies that ${\|\cdot\|}_{A}$ is a norm on $\mathcal{H}$ if and only if $A$ is an injective operator. Further, it can be seen that the seminormed space $(\mathcal{H}, {\|\cdot\|}_{A})$ is complete if and only if $\mathcal{R}(A)$ is closed in $\mathcal{H}$. In this article we continue the line of research begun in \cite{feki03,fpreprint03,faiot}. Notice that the inspiration for
our investigation comes from the works of Kittaneh et al. \cite{A.K.1,A.K.2}.

The semi-inner product $\langle\cdot,\cdot\rangle_A$ induces an inner product on the quotient space $\mathcal{H}/\mathcal{N}(A)$ which is not complete unless $\mathcal{R}(A)$ is closed subspace in $\mathcal{H}$. However, a canonical construction due to L. de Branges and J. Rovnyak in \cite{branrov} shows that the completion of $\mathcal{H}/\mathcal{N}(A)$ is isometrically isomorphic to the Hilbert space $\mathcal{R}(A^{1/2})$
with the inner product
\begin{equation*}
\langle A^{1/2}x,A^{1/2}y\rangle_{\mathbf{R}(A^{1/2})}:=\langle P_{\overline{\mathcal{R}(A)}}x, P_{\overline{\mathcal{R}(A)}}y\rangle,\;\forall\, x,y \in \mathcal{H}.
\end{equation*}
 For the sequel, the Hilbert space $\big(\mathcal{R}(A^{1/2}), \langle\cdot,\cdot\rangle_{\mathbf{R}(A^{1/2})}\big)$ will be denoted by $\mathbf{R}(A^{1/2})$. For more details concerning the Hilbert space $\mathbf{R}(A^{1/2})$, we refer the reader to \cite{acg3} and the references therein.

For $T\in \mathbb{B}(\mathcal{H})$, an operator $S\in \mathbb{B}(\mathcal{H})$ is called an $A$-adjoint operator of $T$ if for every $x, y\in \mathcal{H}$,
we have ${\langle Tx, y\rangle}_{A} = {\langle x, Sy\rangle}_{A}$, that is, $S$ is solution of the operator equation $AX= T^*A$. This kind of equations can be studied by using the next theorem due to Douglas (for its proof see \cite{Dou}).
\begin{thqt}\label{doug}
If $T, S \in \mathbb{B}(\mathcal{H})$, then the following statements are equivalent:
\begin{itemize}
\item[{\rm (i)}] $\mathcal{R}(S) \subseteq \mathcal{R}(T)$.
\item[{\rm (ii)}] $TD=S$ for some $D\in \mathbb{B}(\mathcal{H})$.
\item[{\rm (iii)}] There exists  $\lambda> 0$ such that $\|S^*x\| \leq \lambda\|T^*x\|$ for all $x\in \mathcal{H}$.
\end{itemize}
If one of these conditions holds, then there exists a unique solution of the operator equation $TX=S$, denoted by $Q$, such that $\mathcal{R}(Q) \subseteq \overline{\mathcal{R}(T^{*})}$. Such $Q$ is called the reduced solution of $TX=S$.
\end{thqt}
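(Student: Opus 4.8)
The plan is to prove the theorem by closing the cycle of implications (ii) $\Rightarrow$ (i), (ii) $\Rightarrow$ (iii), (iii) $\Rightarrow$ (ii), and (i) $\Rightarrow$ (ii), and then to extract the reduced solution as a by-product of the last step. The two easy directions come first. For (ii) $\Rightarrow$ (i), if $TD=S$ then $\mathcal{R}(S)=\mathcal{R}(TD)\subseteq\mathcal{R}(T)$. For (ii) $\Rightarrow$ (iii), take adjoints in $TD=S$ to get $S^*=D^*T^*$, so that $\|S^*x\|\leq\|D\|\,\|T^*x\|$ for every $x\in\mathcal{H}$; then $\lambda=\|D\|+1$ does the job.

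The substantive implication is (iii) $\Rightarrow$ (ii). Assuming $\|S^*x\|\leq\lambda\|T^*x\|$ for all $x\in\mathcal{H}$, I would define a linear map $F$ on the (not necessarily closed) subspace $\mathcal{R}(T^*)$ by $F(T^*x):=S^*x$. The hypothesis makes this well defined: if $T^*x=T^*x'$ then $\|S^*(x-x')\|\leq\lambda\|T^*(x-x')\|=0$, so $S^*x=S^*x'$. The same estimate shows $\|F(T^*x)\|\leq\lambda\|T^*x\|$, so $F$ is bounded on $\mathcal{R}(T^*)$ with $\|F\|\leq\lambda$; extend it by continuity to $\overline{\mathcal{R}(T^*)}$ and set $F\equiv 0$ on the orthogonal complement $\overline{\mathcal{R}(T^*)}^{\perp}=\mathcal{N}(T)$. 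This yields $F\in\mathbb{B}(\mathcal{H})$ with $FT^*=S^*$ and $\|F\|\leq\lambda$. Taking adjoints, $D:=F^*$ satisfies $TD=S$ with $\|D\|\leq\lambda$, which is (ii).

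For (i) $\Rightarrow$ (ii) — and simultaneously for the construction of the reduced solution $Q$ — I would argue directly. Given $\mathcal{R}(S)\subseteq\mathcal{R}(T)$, for each $y\in\mathcal{H}$ the equation $Tz=Sy$ is solvable, and among its solutions exactly one lies in $\mathcal{N}(T)^{\perp}=\overline{\mathcal{R}(T^*)}$ (subtract off the projection onto $\mathcal{N}(T)$); call it $Qy$. Then $Q$ is linear, $TQ=S$, and $\mathcal{R}(Q)\subseteq\overline{\mathcal{R}(T^*)}$. Boundedness of $Q$ follows from the closed graph theorem: if $y_n\to y$ and $Qy_n\to w$, then $w\in\overline{\mathcal{R}(T^*)}$ (a closed subspace), while $TQy_n=Sy_n$ gives $Tw=Sy=TQy$ upon passing to the limit, so $T(w-Qy)=0$; since $w-Qy\in\overline{\mathcal{R}(T^*)}\cap\mathcal{N}(T)=\{0\}$, we get $w=Qy$, so the graph of $Q$ is closed and $Q\in\mathbb{B}(\mathcal{H})$. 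This proves (ii). Uniqueness of the reduced solution is the same triviality: if $TX_1=S=TX_2$ with $\mathcal{R}(X_i)\subseteq\overline{\mathcal{R}(T^*)}$, then $\mathcal{R}(X_1-X_2)\subseteq\mathcal{N}(T)\cap\overline{\mathcal{R}(T^*)}=\{0\}$.

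The main obstacle I anticipate is not conceptual depth but careful bookkeeping in the step (iii) $\Rightarrow$ (ii): verifying that $F$ is well defined on $\mathcal{R}(T^*)$, that its continuous extension to $\overline{\mathcal{R}(T^*)}$ meshes correctly with the orthogonal decomposition $\mathcal{H}=\overline{\mathcal{R}(T^*)}\oplus\mathcal{N}(T)$, and that passing to adjoints turns $FT^*=S^*$ into $TD=S$ with the norm bound preserved. Each of these uses only the elementary interplay of ranges, closures, and null spaces of $T$ and $T^*$, but it must be tracked precisely; the closed graph argument for (i) $\Rightarrow$ (ii) is the other place where one must be attentive, specifically in checking that the limit $w$ stays inside $\overline{\mathcal{R}(T^*)}$.
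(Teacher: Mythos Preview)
Your argument is correct and is essentially the classical proof due to Douglas. Note, however, that the paper does not supply its own proof of this statement: Theorem~A is quoted from \cite{Dou} and used as a black box, so there is nothing in the paper to compare your proof against beyond the citation itself. One small remark: in your step (iii) $\Rightarrow$ (ii) the operator $D=F^{*}$ you construct already satisfies $\mathcal{R}(D)\subseteq\overline{\mathcal{R}(T^{*})}$ (since $F$ vanishes on $\mathcal{N}(T)$, one has $F=FP_{\overline{\mathcal{R}(T^{*})}}$ and hence $F^{*}=P_{\overline{\mathcal{R}(T^{*})}}F^{*}$), so the reduced solution is in fact produced there as well; your separate closed-graph argument for (i) $\Rightarrow$ (ii) is valid but not strictly necessary once the chain (i) $\Rightarrow$ (iii) $\Rightarrow$ (ii) is closed.
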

If we denote by $\mathbb{B}_{A}(\mathcal{H})$ and $\mathbb{B}_{A^{1/2}}(\mathcal{H})$ the sets of all operators that admit $A$-adjoints and $A^{1/2}$-adjoints, respectively, then an application of Theorem \ref{doug} gives
\begin{align*}
\mathbb{B}_{A}(\mathcal{H}) = \big\{T\in\mathbb{B}(\mathcal{H})\,; \; \mathcal{R}(T^*A) \subseteq \mathcal{R}(A)\big\},
\end{align*}
and
\begin{align*}
\mathbb{B}_{A^{1/2}}(\mathcal{H}) = \big\{T\in \mathbb{B}(\mathcal{H})\,; \,\, \exists\, c>0\,;
\,\,{\|Tx\|}_{A} \le  c{\|x\|}_{A}, \,\, \forall x\in \mathcal{H}\big\}.
\end{align*}
Operators in $ \mathbb{B}_{A^{1/2}}(\mathcal{H})$ are called $A$-bounded. Notice that $\mathbb{B}_{A}(\mathcal{H})$ and $\mathbb{B}_{A^{1/2}}(\mathcal{H})$ are two subalgebras of $\mathbb{B}(\mathcal{H})$ which are, in general, neither closed nor dense in $\mathbb{B}(\mathcal{H})$ (see \cite{acg1}). Moreover, the following inclusions $\mathbb{B}_{A}(\mathcal{H})\subseteq \mathbb{B}_{A^{1/2}}(\mathcal{H})\subseteq \mathbb{B}(\mathcal{H})$ hold and are in general proper (see \cite{feki01}). The seminorm of an operator $T\in\mathbb{B}_{A^{1/2}}(\mathcal{H})$ is given by
\begin{align*}
{\|T\|}_{A}:=\sup_{\substack{x\in \overline{\mathcal{R}(A)},\\ x\not=0}}\frac{{\|Tx\|}_{A}}{{\|x\|}_{A}}
=\sup\big\{{\|Tx\|}_{A}\,; \,\, x\in\mathcal{H}, {\|x\|}_A =1\big\} < +\infty,
\end{align*}
(see \cite{feki01} and the references therein). Notice that it may happen that ${\|T\|}_A = + \infty$
for some $T\in\mathbb{B}(\mathcal{H})$ (see \cite[Example 2]{feki01}). It is not difficult to verify that, for $T\in\mathbb{B}_{A^{1/2}}(\mathcal{H})$,
we have ${\|Tx\|}_{A}\leq {\|T\|}_{A}{\|x\|}_{A}$ for all $x\in \mathcal{H}$. This yields that, for $T, S\in\mathbb{B}_{A^{1/2}}(\mathcal{H})$, we have ${\|TS\|}_{A}\leq {\|T\|}_{A}{\|S\|}_{A}$.

Recently, several extensions for the concept of the numerical radius have been investigated (for example see \cite{bc,pkbsm} and the reference therein). One of these extensions is the notion $A$-numerical radius of an operator $T\in \mathbb{B}(\mathcal{H})$ which was firstly introduced by Saddi in \cite{saddi} as
\begin{align*}
\omega_{A}(T) = \sup\Big\{\big|{\langle Tx, x\rangle}_A\big|\,; \,\,\, x\in \mathcal{H},\, {\|x\|}_A = 1\Big\}.
\end{align*}
It should be mention here that it may happen that $\omega_A(T) = + \infty$ for some $T\in\mathbb{B}(\mathcal{H})$ (see \cite{feki03}). However, $\omega_{A}(\cdot)$ defines a seminorm on $\mathbb{B}_{A^{1/2}}(\mathcal{H})$ which is equivalent to the $A$-operator seminorm ${\|\cdot\|}_{A}$. More precisely, for every $T\in \mathbb{B}_{A^{1/2}}(\mathcal{H})$, we have
\begin{align}\label{I.1.02}
\frac{1}{2}{\|T\|}_{A} \leq \omega_{A}(T)\leq {\|T\|}_{A}.
\end{align}
For an account of the results related to the $A$-numerical radius of $A$-bounded operators, the reader is referred to \cite{bakfeki01,bakfeki04,B.F.P, B.P.N, B.P.N.RIMA,BNP2,feki01, Feki.Sid}. The $A$-Crawford number of an operator $T$ is defined by
\begin{align*}
c_A(T) = \inf \big\{|{\langle Tx, x\rangle}_A|\,;\,\, x\in\mathcal{H},\,{\|x\|}_A =1\big\}.
\end{align*}
An operator $T\in\mathbb{B}(\mathcal{H})$ is called $A$-selfadjoint if $AT$ is selfadjoint. Further, an operator $T\in \mathbb{B}(\mathcal{H})$ is said $A$-positive if $AT$ is positive and we note $T\geq_A 0$. Trivially, an $A$-positive operator is always an $A$-selfadjoint operator since $\mathcal{H}$ is a complex Hilbert space. Moreover, it was shown in \cite{feki01} that if $T$ is $A$-self-adjoint, then
\begin{equation}\label{aself1}
\|T\|_{A}=\omega_A(T).
\end{equation}

If $T\in\mathbb{B}_{A}(\mathcal{H})$, then the reduced solution to the operator equation $AX = T^*A$ is
denoted by $T^{\sharp_A}$. Notice that $T^{\sharp_A} = A^{\dag}T^*A$, where $A^{\dag}$ is the Moore--Penrose inverse of $A$, which
is the unique linear mapping from $\mathcal{R}(A) \oplus \mathcal{R}(A)^{\perp}$ into $\mathcal{H}$ satisfying the following Moore--Penrose equations:
\begin{align*}
AXA = A, \,\, XAX = X, \,\, XA = P_{\overline{\mathcal{R}(A)}}
\,\, \text{ and } \,\, AX = P_{\overline{\mathcal{R}(A)}}|_{\mathcal{R}(A) \oplus {\mathcal{R}(A)}^{\perp}}.
\end{align*}
We mention here that if $T\in\mathbb{B}_{A}(\mathcal{H})$, then $T^{\sharp_A}\in\mathbb{B}_{A}(\mathcal{H})$,
$(T^{\sharp_A})^{\sharp_A} = P_{\overline{\mathcal{R}(A)}}TP_{\overline{\mathcal{R}(A)}}$ and
$\big((T^{\sharp_A})^{\sharp_A}\big)^{\sharp_A} = T^{\sharp_A}$. Moreover, if $T, S\in\mathbb{B}_{A}(\mathcal{H})$, then $TS\in\mathbb{B}_{A}(\mathcal{H})$ and $(TS)^{\sharp_A} = S^{\sharp_A}T^{\sharp_A}$. Notice that for $T\in\mathbb{B}_{A}(\mathcal{H})$, we have $T^{\sharp_A}T\geq_A 0$ and  $TT^{\sharp_A}\geq_A 0$. Moreover, by using \eqref{aself1}, we see that
\begin{align}\label{I.1.01}
{\|T^{\sharp_A}T\|}_A = {\|TT^{\sharp_A}\|}_A = {\|T\|}^2_A = {\|T^{\sharp_A}\|}^2_A.
\end{align}
For other results covering $T^{\sharp_A}$, we invite the reader to consult \cite{acg2, acg1, Ma.Se.Su}.

Recently, several improvements of the inequalities in \eqref{I.1.02} have been given (see \cite{B.P.N,feki03}). For instance,
it has been shown in \cite{fpreprint03}, that if $T\in\mathbb{B}_{A}(\mathcal{H})$, then
  \begin{equation}\label{feki2020}
  \tfrac{1}{4}\|T^{\sharp_A} T+TT^{\sharp_A}\|_A\le  \omega_A^2(T) \le \tfrac{1}{2}\|T^{\sharp_A} T+TT^{\sharp_A}\|_A.
  \end{equation}
Recently, the present proved in \cite{fpreprint03} that for every $T, S\in\mathbb{B}_{A}(\mathcal{H})$, we have
\begin{align}\label{but}
\omega_A(TS \pm ST)
& \leq 2\sqrt{2}\min\Big\{{\|T\|}_A \omega_A(S), {\|S\|}_A \omega_A(T)\Big\}.
\end{align}
Of course, if $A=I$, we get the well-known inequalities of Fong and Holbrook (see \cite{fonghol}). One main target of this paper is to generalize \eqref{but}. Also, a considerable refinement of \eqref{but} is established.

\section{Results}
In this section, we prove our results. In order to prove our first result in this section, we need the following lemmas.

\begin{lemma}(\cite{faiot})\label{lawil}
Let $T_1, T_2,S_1,S_2\in\mathbb{B}_{A}(\mathcal{H})$. Then
\begin{align*}
\omega_A(T_1S_1 \pm S_2T_2) &\leq \sqrt{{\big\|T_1T_1^{\sharp_A} + T_2^{\sharp_A} T_2\big\|}_A}\sqrt{{\big\|S_1^{\sharp_A} S_1+S_2S_2^{\sharp_A}\big\|}_A}.
\end{align*}
\end{lemma}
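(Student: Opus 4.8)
The plan is to fix an arbitrary vector $x\in\mathcal{H}$ with $\|x\|_A=1$, estimate $\big|\langle(T_1S_1\pm S_2T_2)x,x\rangle_A\big|$, and then take the supremum. The key initial move is to transfer one factor of each product across the semi-inner product by means of the defining identity of the $A$-adjoint, namely $\langle Ty,x\rangle_A=\langle y,T^{\sharp_A}x\rangle_A$ for $T\in\mathbb{B}_A(\mathcal{H})$; this gives
\begin{equation*}
\langle(T_1S_1\pm S_2T_2)x,x\rangle_A=\langle S_1x,T_1^{\sharp_A}x\rangle_A\pm\langle T_2x,S_2^{\sharp_A}x\rangle_A.
\end{equation*}
Applying the triangle inequality (which disposes of the sign $\pm$ uniformly) together with the Cauchy--Schwarz inequality for the positive semidefinite form $\langle\cdot,\cdot\rangle_A$, I obtain
\begin{equation*}
\big|\langle(T_1S_1\pm S_2T_2)x,x\rangle_A\big|\le\|S_1x\|_A\,\|T_1^{\sharp_A}x\|_A+\|T_2x\|_A\,\|S_2^{\sharp_A}x\|_A.
\end{equation*}

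Next I would invoke the elementary inequality $ac+bd\le\sqrt{a^2+b^2}\,\sqrt{c^2+d^2}$ (Cauchy--Schwarz in $\mathbb{R}^2$) with $a=\|T_1^{\sharp_A}x\|_A$, $b=\|T_2x\|_A$, $c=\|S_1x\|_A$, $d=\|S_2^{\sharp_A}x\|_A$. This reduces the task to controlling the two sums of squared seminorms. Using $\langle Ty,x\rangle_A=\langle y,T^{\sharp_A}x\rangle_A$ again and the $A$-positivity of $T_1T_1^{\sharp_A}$ and $T_2^{\sharp_A}T_2$, one has $\|T_1^{\sharp_A}x\|_A^2=\langle T_1T_1^{\sharp_A}x,x\rangle_A$ and $\|T_2x\|_A^2=\langle T_2^{\sharp_A}T_2x,x\rangle_A$, whence
\begin{equation*}
\|T_1^{\sharp_A}x\|_A^2+\|T_2x\|_A^2=\big\langle(T_1T_1^{\sharp_A}+T_2^{\sharp_A}T_2)x,x\big\rangle_A\le\omega_A\big(T_1T_1^{\sharp_A}+T_2^{\sharp_A}T_2\big).
\end{equation*}
Since $T_1T_1^{\sharp_A}\geq_A 0$ and $T_2^{\sharp_A}T_2\geq_A 0$, their sum is $A$-positive, hence $A$-selfadjoint, so by \eqref{aself1} its $A$-numerical radius coincides with $\|T_1T_1^{\sharp_A}+T_2^{\sharp_A}T_2\|_A$. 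The identical argument applied to $\|S_1x\|_A^2+\|S_2^{\sharp_A}x\|_A^2=\big\langle(S_1^{\sharp_A}S_1+S_2S_2^{\sharp_A})x,x\big\rangle_A$ yields the bound $\|S_1^{\sharp_A}S_1+S_2S_2^{\sharp_A}\|_A$. Multiplying the two estimates and taking the supremum over all $x$ with $\|x\|_A=1$ delivers the claimed inequality.

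I do not expect a genuine obstacle here; the argument is a chain of standard manipulations. The only points that warrant care are: checking that vectors in $\mathcal{N}(A)$ are harmless (they contribute nothing to the supremum, since $\|x\|_A=0$ there); confirming that the Cauchy--Schwarz inequality remains valid for the possibly degenerate seminorm $\|\cdot\|_A$; and making sure the adjoint identity $\langle Ty,x\rangle_A=\langle y,T^{\sharp_A}x\rangle_A$ and the $A$-positivity of $TT^{\sharp_A}$, $T^{\sharp_A}T$ are used in the right order. An alternative, essentially equivalent route is to realize everything inside the Hilbert space $\mathbf{R}(A^{1/2})$ and apply a $2\times2$ operator-matrix form of the mixed Schwarz inequality there, but the direct computation above seems the cleanest.
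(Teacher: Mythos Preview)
Your argument is correct and complete: the chain of Cauchy--Schwarz (first for $\langle\cdot,\cdot\rangle_A$, then in $\mathbb{R}^2$), followed by rewriting $\|T_1^{\sharp_A}x\|_A^2+\|T_2x\|_A^2$ and $\|S_1x\|_A^2+\|S_2^{\sharp_A}x\|_A^2$ as $A$-quadratic forms of $A$-positive operators and invoking \eqref{aself1}, is exactly the standard proof. The minor caveat you flagged---that $\|T_1^{\sharp_A}x\|_A^2$ equals $\langle x,T_1T_1^{\sharp_A}x\rangle_A$ rather than literally $\langle T_1T_1^{\sharp_A}x,x\rangle_A$---is harmless since $T_1T_1^{\sharp_A}$ is $A$-selfadjoint, so the two coincide.

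As for comparison: the paper does not prove this lemma at all; it is quoted verbatim from \cite{faiot} and used as a black box. Your write-up therefore supplies what the paper omits, and it is in fact the argument one finds in the cited source (a direct ``generalized mixed Schwarz'' estimate). The alternative route you mention via the Hilbert space $\mathbf{R}(A^{1/2})$ would also work and is closer in spirit to how the paper handles Proposition~\ref{immmmm}, but for this particular inequality your direct computation is shorter and avoids the transfer machinery of Lemma~\ref{alem2}.
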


\begin{lemma}(\cite{faiot})\label{lemma1}
Let $T, S\in\mathbb{B}_{A^{1/2}}(\mathcal{H})$ and $\mathbb{A}=\begin{pmatrix}
A & 0 \\
0 & A
\end{pmatrix}$. Then, the following assertions hold
\begin{itemize}
\item[(i)] $\omega_{\mathbb{A}}\left[\begin{pmatrix}
T & 0 \\
0 & S
\end{pmatrix}\right] = \max\big\{\omega_{A}(T), \omega_{A}(S)\big\}$.
\item[(ii)] $\omega_{\mathbb{A}}\left[\begin{pmatrix}
0 & T \\
T & 0
\end{pmatrix}\right] = \omega_{A}(T)$.
\item[(iii)] ${\left\|\begin{pmatrix}
T & 0 \\
0 & S
\end{pmatrix}\right\|}_{\mathbb{A}}={\left\|\begin{pmatrix}
0 & T \\
S & 0
\end{pmatrix}\right\|}_{\mathbb{A}} = \max\big\{{\|T\|}_{A}, {\|S\|}_{A}\big\}$.
\end{itemize}
\end{lemma}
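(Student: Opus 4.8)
The plan is to obtain the three assertions from two elementary facts valid for every $T\in\mathbb B_{A^{1/2}}(\mathcal H)$ and every $x\in\mathcal H$, namely
\[
|\langle Tx,x\rangle_A|\le \omega_A(T)\,\|x\|_A^2\qquad\text{and}\qquad \|Tx\|_A\le \|T\|_A\,\|x\|_A ,
\]
the first of these being clear after normalising $x$ when $\|x\|_A\neq0$, and following from $|\langle Tx,x\rangle_A|\le\|Tx\|_A\|x\|_A=0$ (Cauchy--Schwarz for $\langle\cdot,\cdot\rangle_A$) when $\|x\|_A=0$, while the second is recalled in the Introduction; together with the observation that, since $\|(x,y)\|_{\mathbb A}^2=\|x\|_A^2+\|y\|_A^2$, the $\mathbb A$-unit sphere of $\mathcal H\oplus\mathcal H$ consists of the pairs $(x,y)$ with $\|x\|_A^2+\|y\|_A^2=1$. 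Write $D=\begin{pmatrix}T&0\\0&S\end{pmatrix}$, $M=\begin{pmatrix}0&T\\T&0\end{pmatrix}$ and $N=\begin{pmatrix}0&T\\S&0\end{pmatrix}$.

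For (i) I would expand directly: on a pair $(x,y)$ of the $\mathbb A$-unit sphere, $\langle D(x,y),(x,y)\rangle_{\mathbb A}=\langle Tx,x\rangle_A+\langle Sy,y\rangle_A$, so by the triangle inequality and the first fact above its modulus is $\le\omega_A(T)\|x\|_A^2+\omega_A(S)\|y\|_A^2\le\max\{\omega_A(T),\omega_A(S)\}$. Conversely, evaluating the quadratic form of $D$ at $(x,0)$ and at $(0,y)$ shows $\omega_{\mathbb A}(D)\ge|\langle Tx,x\rangle_A|$ and $\omega_{\mathbb A}(D)\ge|\langle Sy,y\rangle_A|$ for all $A$-unit $x,y$, hence $\omega_{\mathbb A}(D)\ge\max\{\omega_A(T),\omega_A(S)\}$. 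Part (iii) is the same bookkeeping for the seminorm: since $\|D(x,y)\|_{\mathbb A}^2=\|Tx\|_A^2+\|Sy\|_A^2$ and $\|N(x,y)\|_{\mathbb A}^2=\|Ty\|_A^2+\|Sx\|_A^2$, the second fact gives $\|D\|_{\mathbb A},\|N\|_{\mathbb A}\le\max\{\|T\|_A,\|S\|_A\}$, and the test vectors $(x,0),(0,y)$ (with $\|x\|_A=\|y\|_A=1$) attain $\|S\|_A$ and $\|T\|_A$ in each case.

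The one step needing an idea is (ii). On a pair $(x,y)$ of the $\mathbb A$-unit sphere one has $\langle M(x,y),(x,y)\rangle_{\mathbb A}=\langle Ty,x\rangle_A+\langle Tx,y\rangle_A$, and here I would use the polarization-type identity
\[
\langle Tu,v\rangle_A+\langle Tv,u\rangle_A=\tfrac12\Big(\langle T(u+v),u+v\rangle_A-\langle T(u-v),u-v\rangle_A\Big),
\]
verified by expanding the right-hand side, with $u=x$ and $v=y$. Feeding in the first elementary fact and the parallelogram law $\|x+y\|_A^2+\|x-y\|_A^2=2(\|x\|_A^2+\|y\|_A^2)$ gives $|\langle Ty,x\rangle_A+\langle Tx,y\rangle_A|\le\tfrac12\omega_A(T)\big(\|x+y\|_A^2+\|x-y\|_A^2\big)=\omega_A(T)$, i.e. $\omega_{\mathbb A}(M)\le\omega_A(T)$. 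For the reverse inequality I would evaluate the quadratic form of $M$ at the $\mathbb A$-unit vector $\tfrac{1}{\sqrt2}(x,x)$ with $\|x\|_A=1$, obtaining exactly $\langle Tx,x\rangle_A$, whence $\omega_{\mathbb A}(M)\ge\omega_A(T)$ and equality follows. An alternative for (ii): the operator $U=\tfrac{1}{\sqrt2}\begin{pmatrix}I&I\\I&-I\end{pmatrix}$ on $\mathcal H\oplus\mathcal H$ is a self-adjoint involution commuting with $\mathbb A$, hence preserves $\langle\cdot,\cdot\rangle_{\mathbb A}$, so that $\omega_{\mathbb A}(UXU)=\omega_{\mathbb A}(X)$ for all $X$; since $UMU=\begin{pmatrix}T&0\\0&-T\end{pmatrix}$, assertion (ii) drops out of (i) together with $\omega_A(-T)=\omega_A(T)$.

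I expect the only genuine obstacle to be locating that polarization identity (equivalently, the conjugating operator $U$); after that, the argument is routine, the sole point of care being the directions lying in $\mathcal N(A)$, which the first elementary fact neutralises from the outset.
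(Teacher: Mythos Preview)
Your argument is correct in all three parts. The polarization identity you use for (ii) is easily verified, the parallelogram law holds for the semi-inner product $\langle\cdot,\cdot\rangle_A$, and your handling of the degenerate directions in $\mathcal N(A)$ via the inequality $|\langle Tx,x\rangle_A|\le\omega_A(T)\|x\|_A^2$ (which holds even when $\|x\|_A=0$) is the right precaution. The alternative route through the $\mathbb A$-isometric involution $U$ is also sound and is in fact the standard trick in the literature.

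There is, however, nothing to compare against: the paper does not prove this lemma but merely quotes it from \cite{faiot}. Your self-contained proof is therefore more than what the present paper supplies; either of your two approaches to (ii) would be acceptable, the $U$-conjugation being slightly cleaner since it reduces (ii) to (i) in one line.
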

Our first result in this paper reads as follows.
\begin{theorem}\label{T.2.15}
Let $T_1, T_2,S\in\mathbb{B}_{A}(\mathcal{H})$. Then
\begin{align*}
\omega_A(T_1S \pm ST_2)
& \leq 4\,\omega_{\mathbb{A}}\left[\begin{pmatrix}
0&T_1\\
T_2 &0
\end{pmatrix}\right]\omega_A(S).
\end{align*}
\end{theorem}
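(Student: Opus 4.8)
The plan is to reduce the estimate to Lemma~\ref{lawil} together with the refinement~\eqref{feki2020}, the latter being used both for $S$ itself and for a $2\times2$ block operator built from $T_1$ and $T_2$. Since $T_1,T_2,S\in\mathbb{B}_A(\mathcal{H})$, I would first apply Lemma~\ref{lawil} with $S_1=S_2=S$ to obtain
\[
\omega_A(T_1S\pm ST_2)\le\sqrt{\big\|T_1T_1^{\sharp_A}+T_2^{\sharp_A}T_2\big\|_A}\;\sqrt{\big\|S^{\sharp_A}S+SS^{\sharp_A}\big\|_A}.
\]
The second radical is handled at once: the left-hand inequality in~\eqref{feki2020}, applied with $T$ replaced by $S$, gives $\|S^{\sharp_A}S+SS^{\sharp_A}\|_A\le 4\,\omega_A^2(S)$, whence $\sqrt{\|S^{\sharp_A}S+SS^{\sharp_A}\|_A}\le 2\,\omega_A(S)$.

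For the first radical, set $R:=\begin{pmatrix}0&T_1\\T_2&0\end{pmatrix}$ acting on $\mathcal{H}\oplus\mathcal{H}$, equipped with $\mathbb{A}=\begin{pmatrix}A&0\\0&A\end{pmatrix}$. Then $R\in\mathbb{B}_{\mathbb{A}}(\mathcal{H}\oplus\mathcal{H})$ with $R^{\sharp_{\mathbb{A}}}=\begin{pmatrix}0&T_2^{\sharp_A}\\T_1^{\sharp_A}&0\end{pmatrix}$, so a direct block computation gives
\[
RR^{\sharp_{\mathbb{A}}}+R^{\sharp_{\mathbb{A}}}R=\begin{pmatrix}T_1T_1^{\sharp_A}+T_2^{\sharp_A}T_2&0\\0&T_1^{\sharp_A}T_1+T_2T_2^{\sharp_A}\end{pmatrix}.
\]
By Lemma~\ref{lemma1}(iii), the $\mathbb{A}$-seminorm of this block-diagonal operator equals the maximum of the $A$-seminorms of its two diagonal entries, and in particular it dominates $\|T_1T_1^{\sharp_A}+T_2^{\sharp_A}T_2\|_A$. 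Applying the left-hand inequality in~\eqref{feki2020} to $R$ in the semi-Hilbert space $(\mathcal{H}\oplus\mathcal{H},\mathbb{A})$ gives $\|RR^{\sharp_{\mathbb{A}}}+R^{\sharp_{\mathbb{A}}}R\|_{\mathbb{A}}\le 4\,\omega_{\mathbb{A}}^2(R)$, so altogether $\sqrt{\|T_1T_1^{\sharp_A}+T_2^{\sharp_A}T_2\|_A}\le 2\,\omega_{\mathbb{A}}(R)$. Multiplying the two radical estimates then yields $\omega_A(T_1S\pm ST_2)\le 2\,\omega_{\mathbb{A}}(R)\cdot 2\,\omega_A(S)=4\,\omega_{\mathbb{A}}(R)\,\omega_A(S)$, which is exactly the asserted inequality.

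I do not expect a genuine obstacle here: the whole argument is a repackaging of known facts. The two small points requiring a word of justification are the formula for $R^{\sharp_{\mathbb{A}}}$ and the validity of~\eqref{feki2020} and Lemma~\ref{lemma1} for the positive operator $\mathbb{A}=\mathrm{diag}(A,A)$ on $\mathcal{H}\oplus\mathcal{H}$; both are routine facts about $2\times2$ operator matrices over a semi-Hilbert space. The only thing one must be careful about is invoking the correct half of the two-sided estimate~\eqref{feki2020} (and of the $\max$ in Lemma~\ref{lemma1}(iii)), so that the constant comes out as precisely $4$ and not larger.
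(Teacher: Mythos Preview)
Your argument is correct and is essentially identical to the paper's: both apply Lemma~\ref{lawil} with $S_1=S_2=S$, then use the left-hand inequality in~\eqref{feki2020} for $S$ and for the off-diagonal block operator $\begin{pmatrix}0&T_1\\T_2&0\end{pmatrix}$, together with Lemma~\ref{lemma1}(iii) to compute the $\mathbb{A}$-seminorm of the resulting block-diagonal operator. The only cosmetic difference is the order in which the two radicals are estimated.
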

\begin{proof}
By letting $S_1=S_2=S$ in Lemma \ref{lawil} we obtain
\begin{align*}
\omega_A(T_1S \pm ST_2) &\leq \sqrt{{\big\|T_1T_1^{\sharp_A} + T_2^{\sharp_A} T_2\big\|}_A}\sqrt{{\big\|S^{\sharp_A} S+SS^{\sharp_A}\big\|}_A}.
\end{align*}
Moreover, by using the first inequality in \eqref{feki2020} we get
\begin{align}\label{lab}
\omega_A(T_1S \pm ST_2) \leq 2\sqrt{{\big\|T_1T_1^{\sharp_A} + T_2^{\sharp_A} T_2\big\|}_A}\,\omega_A(S).
\end{align}
Let $\mathbb{T}=\begin{pmatrix}
0&T_1\\
T_2 &0
\end{pmatrix}$. It can be observed that
$$\mathbb{T}^{\sharp_\mathbb{A}}\mathbb{T}+\mathbb{T}\mathbb{T}^{\sharp_\mathbb{A}}=\begin{pmatrix}
T_1T_1^{\sharp_A} + T_2^{\sharp_A} T_2&0\\
0 &T_1^{\sharp_A}T_1 + T_2 T_2^{\sharp_A}
\end{pmatrix}$$
This implies, by Lemma \ref{lemma1}(iii), that
$$\left\|\mathbb{T}^{\sharp_\mathbb{A}}\mathbb{T}+\mathbb{T}\mathbb{T}^{\sharp_\mathbb{A}}\right\|_\mathbb{A}=\max\left\{{\big\|T_1T_1^{\sharp_A} + T_2^{\sharp_A} T_2\big\|}_A, {\big\|T_1^{\sharp_A}T_1 + T_2 T_2^{\sharp_A}\big\|}_A \right\}.$$
So, by taking into consideration \eqref{lab} we see that
\begin{align*}
\omega_A(T_1S \pm ST_2)
&\leq 2\sqrt{{\big\|T_1T_1^{\sharp_A} + T_2^{\sharp_A} T_2\big\|}_A}\,\omega_A(S)\\
&\leq 2\sqrt{\left\|\mathbb{T}^{\sharp_\mathbb{A}}\mathbb{T}+\mathbb{T}\mathbb{T}^{\sharp_\mathbb{A}}\right\|_\mathbb{A}}\,\omega_A(S)\\
&\leq 4\,\omega_{\mathbb{A}}(\mathbb{T})\,\omega_A(S),
\end{align*}
where the last inequality follows from the first inequality in \eqref{feki2020}. Hence the proof is complete.
\end{proof}

The following corollary is an immediate consequence of Theorem \ref{T.2.15} and  provides an upper bound for the $A$-numerical radius of the commutator $TS-ST$ when $T$ and $S$ are in $\mathbb{B}_{A}(\mathcal{H})$.
\begin{corollary}
Let $T, S\in\mathbb{B}_{A}(\mathcal{H})$. Then
\begin{align}\label{sss1}
\omega_A(TS \pm ST) &\leq 4\omega_A(T)\,\omega_A(S).
\end{align}
Moreover, if $TS=ST$, then
\begin{align}\label{sss2}
\omega_A(TS) &\leq 2\omega_A(T)\,\omega_A(S).
\end{align}
\end{corollary}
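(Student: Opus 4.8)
The plan is to obtain both inequalities as direct specializations of Theorem~\ref{T.2.15}. For \eqref{sss1}, I would take $T_1=T_2=T$ in Theorem~\ref{T.2.15}, which is legitimate since $T\in\mathbb{B}_{A}(\mathcal{H})$; this gives
\begin{align*}
\omega_A(TS \pm ST) \leq 4\,\omega_{\mathbb{A}}\left[\begin{pmatrix} 0 & T \\ T & 0 \end{pmatrix}\right]\omega_A(S).
\end{align*}
Next I would invoke Lemma~\ref{lemma1}(ii) --- applicable because $\mathbb{B}_{A}(\mathcal{H})\subseteq\mathbb{B}_{A^{1/2}}(\mathcal{H})$ --- to evaluate the off-diagonal block term as $\omega_{\mathbb{A}}\left[\begin{pmatrix} 0 & T \\ T & 0 \end{pmatrix}\right]=\omega_A(T)$. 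Substituting this back produces \eqref{sss1}.

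For the second assertion, I would use the hypothesis $TS=ST$, which forces $TS+ST=2TS$. Applying \eqref{sss1} with the plus sign and using that $\omega_A(\cdot)$ is a seminorm on $\mathbb{B}_{A^{1/2}}(\mathcal{H})$, hence positively homogeneous, one gets
\begin{align*}
2\,\omega_A(TS)=\omega_A(2TS)=\omega_A(TS+ST)\leq 4\,\omega_A(T)\,\omega_A(S);
\end{align*}
dividing by $2$ yields \eqref{sss2}.

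I expect no genuine obstacle in this argument, since all the content is already carried by Theorem~\ref{T.2.15}. The only points requiring a moment's care are the correct application of Lemma~\ref{lemma1}(ii) to the operator matrix $\begin{pmatrix} 0 & T \\ T & 0 \end{pmatrix}$ and the positive homogeneity of the $A$-numerical radius, both of which are recorded in the preliminaries; in particular one must ensure that $T$ lies in $\mathbb{B}_{A^{1/2}}(\mathcal{H})$ for that lemma to apply, which is guaranteed by the standing hypothesis $T\in\mathbb{B}_{A}(\mathcal{H})$.
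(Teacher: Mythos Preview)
Your proposal is correct and mirrors the paper's own proof: specialize Theorem~\ref{T.2.15} with $T_1=T_2=T$, then evaluate the off-diagonal block via Lemma~\ref{lemma1} to get \eqref{sss1}, and deduce \eqref{sss2} by the homogeneity trick on $TS+ST=2TS$. (In fact the paper cites Lemma~\ref{lemma1}(i) here, but your choice of part~(ii) is the pertinent one for the matrix $\begin{pmatrix}0&T\\ T&0\end{pmatrix}$.)
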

\begin{proof}
By letting $T_1=T_2=T$ in Theorem \ref{T.2.15} and then using Lemma \ref{lemma1}(i) we reach \eqref{sss1}. Further, \eqref{sss2} follows immediately from \eqref{sss1}.
\end{proof}

Now, we aim to prove a generalization of \eqref{but}. In order to achieve this goal, we need to prove the following result.
\begin{proposition}\label{immmmm}
Let $T,S\in \mathbb{B}_A(\mathcal{H})$. Then,
\begin{equation}\label{omprovenew}
\| TT^{\sharp_A}+S^{\sharp_A}S\|_A\leq \max\left\{\|T\|_A^2,\|S\|_A^2\right\}+\|ST\|_A.
\end{equation}
\end{proposition}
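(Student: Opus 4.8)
The plan is to realize the operator $TT^{\sharp_A}+S^{\sharp_A}S$ as a compression (or a corner) of an $A$-selfadjoint operator built from a $2\times 2$ block matrix, so that the seminorm in the left-hand side becomes amenable to the block techniques of Lemma \ref{lemma1}. Concretely, I would consider on $\mathcal H\oplus\mathcal H$ the operator
\begin{equation*}
\mathbb X=\begin{pmatrix} T & S^{\sharp_A}\\ 0 & 0\end{pmatrix},
\end{equation*}
which lies in $\mathbb B_{\mathbb A}(\mathcal H\oplus\mathcal H)$ with $\mathbb A=\begin{pmatrix}A&0\\0&A\end{pmatrix}$, and compute $\mathbb X\mathbb X^{\sharp_{\mathbb A}}$. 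Since $(S^{\sharp_A})^{\sharp_A}=P_{\overline{\mathcal R(A)}}SP_{\overline{\mathcal R(A)}}$ behaves like $S$ inside all the relevant $A$-seminorm computations, one gets
\begin{equation*}
\mathbb X\mathbb X^{\sharp_{\mathbb A}}=\begin{pmatrix} TT^{\sharp_A}+S^{\sharp_A}S & 0\\ 0 & 0\end{pmatrix},
\end{equation*}
so that $\|TT^{\sharp_A}+S^{\sharp_A}S\|_A=\|\mathbb X\mathbb X^{\sharp_{\mathbb A}}\|_{\mathbb A}=\|\mathbb X\|_{\mathbb A}^2$ by \eqref{I.1.01}. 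Thus the left-hand side is exactly the square of the $\mathbb A$-seminorm of a single block operator.

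Next I would estimate $\|\mathbb X\|_{\mathbb A}$. The natural route is to split $\mathbb X$ additively as a product/sum that separates the diagonal-type part from the off-diagonal part. Writing $\mathbb X=\begin{pmatrix}T&0\\0&0\end{pmatrix}+\begin{pmatrix}0&S^{\sharp_A}\\0&0\end{pmatrix}$ and using the triangle inequality for $\|\cdot\|_{\mathbb A}$ together with Lemma \ref{lemma1}(iii) would only give $\|T\|_A+\|S\|_A$, which is too weak; the term $\|ST\|_A$ must come from a genuinely multiplicative decomposition. So instead I would look for a factorization $\mathbb X=\mathbb Y\mathbb Z$ where $\mathbb Y,\mathbb Z$ are chosen so that $\|\mathbb Y\|_{\mathbb A}$ and $\|\mathbb Z\|_{\mathbb A}$ are controlled by $\max\{\|T\|_A,\|S\|_A\}$ and by a quantity producing $\|ST\|_A$ after squaring — for instance exploiting that $\begin{pmatrix}T&S^{\sharp_A}\\0&0\end{pmatrix}$ has the same $\mathbb A$-seminorm as $\begin{pmatrix}S&0\\0&T^{\sharp_A}\end{pmatrix}\cdot(\text{permutation-type block})$, or directly bounding $\|\mathbb X x\|_{\mathbb A}^2$ for a unit vector $x=(x_1,x_2)$ by $\|Tx_1+S^{\sharp_A}x_2\|_A^2$ and applying the elementary scalar inequality $\|a+b\|^2\le\max\{\|a\|^2/\alpha,\|b\|^2/\beta\}(\alpha+\beta)$-type estimates after inserting $\|S^{\sharp_A}x_2\|_A\le\|S\|_A\|x_2\|_A$ and a cross-term bound $2\,\mathrm{Re}\langle Tx_1,S^{\sharp_A}x_2\rangle_A\le 2\|ST\|_A\|x_1\|_A\|x_2\|_A$ coming from $\langle Tx_1,S^{\sharp_A}x_2\rangle_A=\langle STx_1,x_2\rangle_A$.

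That last identity, $\langle Tx_1,S^{\sharp_A}x_2\rangle_A=\langle x_1,T^{\sharp_A}S^{\sharp_A}x_2\rangle_A=\langle(ST)x_1,x_2\rangle_A$ (using $(ST)^{\sharp_A}=T^{\sharp_A}S^{\sharp_A}$), is really the heart of the matter: it is what converts the cross term into $\|ST\|_A$. After this, expanding $\|Tx_1+S^{\sharp_A}x_2\|_A^2=\|Tx_1\|_A^2+2\,\mathrm{Re}\langle(ST)x_1,x_2\rangle_A+\|S^{\sharp_A}x_2\|_A^2$ and bounding each piece with $\|x_1\|_A^2+\|x_2\|_A^2=1$, I would arrive at $\|\mathbb X\|_{\mathbb A}^2\le\max\{\|T\|_A^2,\|S\|_A^2\}+\|ST\|_A$ after optimizing over the split $\|x_1\|_A^2=t$, $\|x_2\|_A^2=1-t$; the function $t\|T\|_A^2+(1-t)\|S\|_A^2+2\sqrt{t(1-t)}\,\|ST\|_A$ is maximized at an interior point giving precisely the claimed bound (the maximum of $\alpha t+\beta(1-t)+2\gamma\sqrt{t(1-t)}$ is $\tfrac{\alpha+\beta}{2}+\tfrac12\sqrt{(\alpha-\beta)^2+4\gamma^2}$, which is $\le\max\{\alpha,\beta\}+\gamma$ when $\gamma\ge0$).

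\textbf{Main obstacle.} The delicate point I anticipate is not the scalar optimization but making the block-operator identity for $\mathbb X\mathbb X^{\sharp_{\mathbb A}}$ rigorous in the presence of the projections $P_{\overline{\mathcal R(A)}}$ hidden in $(S^{\sharp_A})^{\sharp_A}$: one must check that replacing $(S^{\sharp_A})^{\sharp_A}$ by $S$ is harmless because all quantities are measured in $\|\cdot\|_A$ and the vectors involved can be taken in $\overline{\mathcal R(A)}$, exactly as in the proof of \eqref{I.1.01}. Equivalently, one can bypass blocks entirely and run the estimate directly on $\|TT^{\sharp_A}+S^{\sharp_A}S\|_A=\sup\{\langle(TT^{\sharp_A}+S^{\sharp_A}S)x,x\rangle_A:\|x\|_A=1\}$ (valid since this operator is $A$-positive, hence $A$-selfadjoint, so \eqref{aself1} applies), writing $\langle(TT^{\sharp_A}+S^{\sharp_A}S)x,x\rangle_A=\|T^{\sharp_A}x\|_A^2+\|Sx\|_A^2$ — wait, that splits it as a sum, not the needed mixed form — so the two-vector block formulation really is the right bookkeeping device, and verifying the projection subtlety there is the step that needs care.
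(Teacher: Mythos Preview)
Your proposal is correct, and the projection issue you flag is harmless exactly for the reason you indicate: since $A(S^{\sharp_A})^{\sharp_A}=AS$ (because $AP_{\overline{\mathcal R(A)}}=A$ and $S$ leaves $\mathcal N(A)$ invariant), the operator $S^{\sharp_A}(S^{\sharp_A})^{\sharp_A}$ agrees with $S^{\sharp_A}S$ in every $\langle\cdot,\cdot\rangle_A$-expression, so the block identity $\|\mathbb X\|_{\mathbb A}^2=\|TT^{\sharp_A}+S^{\sharp_A}S\|_A$ is legitimate. After that, your expansion $\|Tx_1+S^{\sharp_A}x_2\|_A^2=\|Tx_1\|_A^2+2\,\mathrm{Re}\langle STx_1,x_2\rangle_A+\|S^{\sharp_A}x_2\|_A^2$, the Cauchy--Schwarz bound on the cross term, and the scalar maximization of $\alpha t+\beta(1-t)+2\gamma\sqrt{t(1-t)}$ all go through and yield precisely $\max\{\|T\|_A^2,\|S\|_A^2\}+\|ST\|_A$.

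This is a genuinely different argument from the paper's. The paper does \emph{not} work directly in the semi-Hilbert setting: it first invokes Lemma~\ref{alem2} to transfer everything to the honest Hilbert space $\mathbf R(A^{1/2})$, replacing $T,S$ by $\widetilde T,\widetilde S$ and $(\cdot)^{\sharp_A}$ by the ordinary adjoint. It then writes $\|(\widetilde S)^*\widetilde S+\widetilde T(\widetilde T)^*\|$ as a spectral radius, realizes this spectral radius via a $2\times2$ block factorization, swaps the factors (using $r(XY)=r(YX)$), and applies the Hou--Du inequality \cite{hou} to bound $r$ of the block by the norm of the $2\times2$ scalar matrix $\begin{pmatrix}\|S\|_A^2&\|ST\|_A\\ \|ST\|_A&\|T\|_A^2\end{pmatrix}$, which is then estimated by the triangle inequality. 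Your route avoids both the transfer lemma and the external Hou--Du result: it is entirely elementary (Cauchy--Schwarz plus a one-variable optimization) and, as a bonus, isolates the sharper intermediate bound $\tfrac{\|T\|_A^2+\|S\|_A^2}{2}+\tfrac12\sqrt{(\|T\|_A^2-\|S\|_A^2)^2+4\|ST\|_A^2}$ before relaxing to the stated inequality. The paper's approach, on the other hand, illustrates how $A$-seminorm questions can be systematically reduced to classical operator-norm inequalities, which is a reusable principle. Both arrive at the same endpoint; yours is shorter and more self-contained, while the paper's is more modular.
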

To prove \eqref{omprovenew}, we shall recall the following lemma.
\begin{lemqt}(\cite{acg3,Ma.Se.Su})\label{alem2}
	Let $T\in \mathbb{B}(\mathcal{H})$. Then $T\in \mathbb{B}_{A^{1/2}}(\mathcal{H})$ if and only if there exists a unique $\widetilde{T}\in \mathbb{B}(\mathbf{R}(A^{1/2}))$ such that $Z_AT =\widetilde{T}Z_A$. Here, $Z_{A}: \mathcal{H} \rightarrow \mathbf{R}(A^{1/2})$ is
	defined by $Z_{A}x = Ax$. Moreover, the following properties hold
\begin{itemize}
  \item [(i)] $\|T\|_A=\|\widetilde{T}\|_{\mathbb{B}(\mathbf{R}(A^{1/2}))}$.
  \item [(ii)]  $\widetilde{T^{\sharp_A}}=(\widetilde{T})^*.$
\end{itemize}
\end{lemqt}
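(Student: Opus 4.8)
The plan is to view $Z_A$ as an isometry from the semi-Hilbert space $(\mathcal{H},\|\cdot\|_A)$ onto a dense subspace of $\mathbf{R}(A^{1/2})$ and then transport $T$ across it by continuity. The cornerstone, which I would establish first, is the identity $\langle Z_Ax, Z_Ay\rangle_{\mathbf{R}(A^{1/2})}=\langle x,y\rangle_A$ for all $x,y\in\mathcal{H}$; in particular $\|Z_Ax\|_{\mathbf{R}(A^{1/2})}=\|x\|_A$. To verify it I would write $Z_Ax=Ax=A^{1/2}(A^{1/2}x)$ and apply the defining formula for $\langle\cdot,\cdot\rangle_{\mathbf{R}(A^{1/2})}$ with arguments $A^{1/2}x$ and $A^{1/2}y$; since $A^{1/2}x\in\mathcal{R}(A^{1/2})\subseteq\overline{\mathcal{R}(A)}$, the projection $P_{\overline{\mathcal{R}(A)}}$ acts as the identity and the right-hand side collapses to $\langle A^{1/2}x,A^{1/2}y\rangle=\langle x,y\rangle_A$. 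I would also note that $Z_A$ has dense range: indeed $Z_A(\mathcal{H})=\mathcal{R}(A)$ is exactly the dense image of $\mathcal{H}/\mathcal{N}(A)$ inside its completion $\mathbf{R}(A^{1/2})$.

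For the forward implication, assume $T\in\mathbb{B}_{A^{1/2}}(\mathcal{H})$, so $\|Tx\|_A\le\|T\|_A\|x\|_A$ for every $x$. I would define $\widetilde{T}$ on the dense subspace $\mathcal{R}(A)=Z_A(\mathcal{H})$ by $\widetilde{T}(Z_Ax)=Z_A(Tx)$, that is, $\widetilde{T}(Ax)=ATx$. This is well defined because $Z_Ax=Z_Ax'$ forces $x-x'\in\mathcal{N}(A)$, whence $\|T(x-x')\|_A\le\|T\|_A\|x-x'\|_A=0$ and so $Z_A(Tx)=Z_A(Tx')$. The isometry identity then yields $\|\widetilde{T}(Z_Ax)\|_{\mathbf{R}(A^{1/2})}=\|Tx\|_A\le\|T\|_A\|Z_Ax\|_{\mathbf{R}(A^{1/2})}$, so $\widetilde{T}$ is bounded on a dense subspace and extends uniquely to $\widetilde{T}\in\mathbb{B}(\mathbf{R}(A^{1/2}))$ with $Z_AT=\widetilde{T}Z_A$ and $\|\widetilde{T}\|\le\|T\|_A$; any operator satisfying the intertwining relation agrees with it on the dense range $\mathcal{R}(A)$, giving uniqueness. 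Conversely, if such a $\widetilde{T}$ exists, then $\|Tx\|_A=\|\widetilde{T}Z_Ax\|_{\mathbf{R}(A^{1/2})}\le\|\widetilde{T}\|\,\|x\|_A$, so $T\in\mathbb{B}_{A^{1/2}}(\mathcal{H})$ with $\|T\|_A\le\|\widetilde{T}\|$. Combining the two norm bounds gives property (i), $\|T\|_A=\|\widetilde{T}\|_{\mathbb{B}(\mathbf{R}(A^{1/2}))}$.

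For property (ii), now with $T\in\mathbb{B}_A(\mathcal{H})$, I would verify $\widetilde{T^{\sharp_A}}=(\widetilde{T})^*$ on the dense set $\mathcal{R}(A)\times\mathcal{R}(A)$ and extend by continuity. For $\xi=Z_Ax$ and $\eta=Z_Ay$, the isometry identity together with the intertwining relations gives $\langle\widetilde{T}\xi,\eta\rangle_{\mathbf{R}(A^{1/2})}=\langle Z_A(Tx),Z_Ay\rangle_{\mathbf{R}(A^{1/2})}=\langle Tx,y\rangle_A$ and $\langle\xi,\widetilde{T^{\sharp_A}}\eta\rangle_{\mathbf{R}(A^{1/2})}=\langle Z_Ax,Z_A(T^{\sharp_A}y)\rangle_{\mathbf{R}(A^{1/2})}=\langle x,T^{\sharp_A}y\rangle_A$. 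The defining relation $\langle Tx,y\rangle_A=\langle x,T^{\sharp_A}y\rangle_A$ of the $A$-adjoint equates these two quantities, so $\langle\widetilde{T}\xi,\eta\rangle_{\mathbf{R}(A^{1/2})}=\langle\xi,\widetilde{T^{\sharp_A}}\eta\rangle_{\mathbf{R}(A^{1/2})}$ on $\mathcal{R}(A)\times\mathcal{R}(A)$, and density yields the claim.

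I expect the main obstacle to be the bookkeeping around the isometry identity and density rather than any single hard estimate: one must carefully unwind the definition of $\langle\cdot,\cdot\rangle_{\mathbf{R}(A^{1/2})}$ (including the role of $P_{\overline{\mathcal{R}(A)}}$ and the identification $\overline{\mathcal{R}(A^{1/2})}=\overline{\mathcal{R}(A)}$), and since $\widetilde{T}$ is initially only defined on the dense range $\mathcal{R}(A)$, every operator identity must first be proved there and then propagated by continuity. Once the isometry identity is in place, the remaining arguments are routine extension-by-density.
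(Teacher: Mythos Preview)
Your argument is correct and is essentially the standard proof of this lemma as found in the cited references \cite{acg3,Ma.Se.Su}. Note, however, that the paper itself does not prove this statement: it is quoted from the literature (hence the citation in the lemma header) and used as a black box, so there is no ``paper's own proof'' to compare against. Your verification of the isometry identity $\langle Z_Ax,Z_Ay\rangle_{\mathbf{R}(A^{1/2})}=\langle x,y\rangle_A$, the density of $\mathcal{R}(A)$ in $\mathbf{R}(A^{1/2})$, and the extension-by-continuity arguments for both the intertwining relation and the adjoint identity are all sound and match the approach in the original sources.
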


Now, we are ready to prove Proposition \ref{immmmm}.

\begin{proof}[Proof of Proposition \ref{immmmm}]
Since, $T,S\in \mathbb{B}_{A^{1/2}}(\mathcal{H})$, then by Lemma \ref{alem2} there exists two unique operators $\widetilde{T},\widetilde{S}\in \mathbb{B}(\mathbf{R}(A^{1/2}))$ such that $Z_AT=\widetilde{T}Z_A$ and $Z_AS=\widetilde{S}Z_A$. Moreover, clearly $T+S$ and $TS$ are in $\mathbb{B}_{A^{1/2}}(\mathcal{H})$. An application of \cite[Lemma 2.1.]{fekilaa} gives
\begin{equation}\label{prosum}
\widetilde{TS}=\widetilde{T}\widetilde{S}\;\text{ and }\; \widetilde{T+S}=\widetilde{T}+\widetilde{S}.
\end{equation}
Now, by using Lemma \ref{alem2} together with \eqref{prosum}, it can be seen that
\begin{align}\label{firr}
\|S^{\sharp_A} S+TT^{\sharp_A}\|_A
& =\|\widetilde{S^{\sharp_A} S+TT^{\sharp_A}}\|_{\mathbb{B}(\mathbf{R}(A^{1/2}))} \nonumber\\
 &=\|(\widetilde{S})^{*}\widetilde{S}+\widetilde{T}(\widetilde{T})^{*}\|_{\mathbb{B}(\mathbf{R}(A^{1/2}))}.
\end{align}
Moreover, by using basic properties of the spectral radius of Hilbert space operators, we see that
\begin{align*}
\|(\widetilde{S})^{*}\widetilde{S}+\widetilde{T}(\widetilde{T})^{*}\|_{\mathbb{B}(\mathbf{R}(A^{1/2}))}
& =r\left((\widetilde{S})^{*}\widetilde{S}+\widetilde{T}(\widetilde{T})^{*}\right)\\
 &=r\left[\begin{pmatrix}
(\widetilde{S})^{*}\widetilde{S}+\widetilde{T}(\widetilde{T})^{*}&0 \\
0&0
\end{pmatrix}\right]\\
&=r\left[\begin{pmatrix}
|\widetilde{S}|&|(\widetilde{T})^{*}| \\
0&0
\end{pmatrix}
\begin{pmatrix}
|\widetilde{S}|&0 \\
|(\widetilde{T})^{*}|&0
\end{pmatrix}
\right]\\
&=r\left[
\begin{pmatrix}
|\widetilde{S}|&0 \\
|(\widetilde{T})^{*}|&0
\end{pmatrix}
\begin{pmatrix}
|\widetilde{S}|&|(\widetilde{T})^{*}|\\
0&0
\end{pmatrix}
\right]
\end{align*}
Hence, we get
\begin{align*}
\|(\widetilde{S})^{*}\widetilde{S}+\widetilde{T}(\widetilde{T})^{*}\|_{\mathbb{B}(\mathbf{R}(A^{1/2}))}=r\left[
\begin{pmatrix}
(\widetilde{S})^{*}\widetilde{S}& |\widetilde{S}|\,|(\widetilde{T})^{*}|\\
|(\widetilde{T})^{*}|\,|\widetilde{S}|&\widetilde{T}(\widetilde{T})^{*}
\end{pmatrix}
\right]
\end{align*}
Thus, by using \cite[Theorem 1.1.]{hou} we obtain
\begin{align*}
&\|(\widetilde{S})^{*}\widetilde{S}+\widetilde{T}(\widetilde{T})^{*}\|_{\mathbb{B}(\mathbf{R}(A^{1/2}))}\\
&\leq r\left[
\begin{pmatrix}
\|(\widetilde{S})^{*}\widetilde{S}\|_{\mathbb{B}(\mathbf{R}(A^{1/2}))}& \|\,|\widetilde{S}|\,|(\widetilde{T})^{*}|\,\|_{\mathbb{B}(\mathbf{R}(A^{1/2}))}\\
\|\,|(\widetilde{T})^{*}|\,|\widetilde{S}|\,\|_{\mathbb{B}(\mathbf{R}(A^{1/2}))}&\|\widetilde{T}(\widetilde{T})^{*}\|_{\mathbb{B}(\mathbf{R}(A^{1/2}))}
\end{pmatrix}
\right]\\
&=\left\|
\begin{pmatrix}
\|\widetilde{S}\|_{\mathbb{B}(\mathbf{R}(A^{1/2}))}^2& \|\widetilde{S}\widetilde{T}\|_{\mathbb{B}(\mathbf{R}(A^{1/2}))}\\
\|\widetilde{S}\widetilde{T}\|_{\mathbb{B}(\mathbf{R}(A^{1/2}))}&\|\widetilde{T}\|_{\mathbb{B}(\mathbf{R}(A^{1/2}))}^2
\end{pmatrix}
\right\|,
\end{align*}
where the last equality follows since $$\|\,|\widetilde{S}|\,|(\widetilde{T})^{*}|\,\|_{\mathbb{B}(\mathbf{R}(A^{1/2}))}=\|\,|(\widetilde{T})^{*}|\,|\widetilde{S}|\,\|_{\mathbb{B}(\mathbf{R}(A^{1/2}))}=
\|\widetilde{S}\widetilde{T}\|_{\mathbb{B}(\mathbf{R}(A^{1/2}))}.$$
So, we infer that
\begin{align*}
&\|(\widetilde{S})^{*}\widetilde{S}+\widetilde{T}(\widetilde{T})^{*}\|_{\mathbb{B}(\mathbf{R}(A^{1/2}))}\\
&\leq \left\|
\begin{pmatrix}
\|\widetilde{S}\|_{\mathbb{B}(\mathbf{R}(A^{1/2}))}^2& 0\\
0 &\|\widetilde{T}\|_{\mathbb{B}(\mathbf{R}(A^{1/2}))}^2
\end{pmatrix}
\right\|+\left\|
\begin{pmatrix}
0& \|\widetilde{S}\widetilde{T}\|_{\mathbb{B}(\mathbf{R}(A^{1/2}))}\\
\|\widetilde{S}\widetilde{T}\|_{\mathbb{B}(\mathbf{R}(A^{1/2}))}&0
\end{pmatrix}
\right\|\\
&=\max\left\{\|\widetilde{S}\|_{\mathbb{B}(\mathbf{R}(A^{1/2}))}^2,\|\widetilde{T}\|_{\mathbb{B}(\mathbf{R}(A^{1/2}))}^2\right\}+\|\widetilde{S}\widetilde{T}\|_{\mathbb{B}(\mathbf{R}(A^{1/2}))},
\end{align*}
where the last equality follows from Lemma \ref{lemma1}(iii) by letting $A=I$. So, by taking into account \eqref{firr} and then applying Lemma \ref{alem2}(i) we get
\begin{equation*}
\| TT^{\sharp_A}+S^{\sharp_A}S\|_A\leq \max\left\{\|T\|_A^2,\|S\|_A^2\right\}+\|ST\|_A.
\end{equation*}
This achieves the proof.
\end{proof}

Now, we are in a position to prove the following theorem which generalizes \eqref{but}.
\begin{theorem}\label{a5ir}
Let $T, S,X,Y\in\mathbb{B}_{A}(\mathcal{H})$ and $\mathbb{A}=\begin{pmatrix}
A &0\\
0 &A
\end{pmatrix}$. Then
\begin{align}\label{ccdd}
\omega_A(TX \pm YS)
&\leq 2\sqrt{\max\left\{\|T\|_A^2,\|S\|_A^2\right\}+\|ST\|_A}\sqrt{\omega_\mathbb{A}^2\left[ \begin{pmatrix}
0&X\\
Y &0
\end{pmatrix}\right]-\frac{1}{2}c_A(YX) }\nonumber\\
& \leq 2\sqrt{2}\max\left\{\|T\|_A,\|S\|_A\right\}\omega_{\mathbb{A}}\left[\begin{pmatrix}
0&X\\
Y &0
\end{pmatrix}\right].
\end{align}
\end{theorem}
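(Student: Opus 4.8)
The plan is to apply Lemma~\ref{lawil} to split $\omega_A(TX\pm YS)$ into a product of two square roots, recognise the first one as the quantity already estimated in Proposition~\ref{immmmm}, and then devote the main effort to the second. Taking $T_1=T$, $S_1=X$, $S_2=Y$, $T_2=S$ in Lemma~\ref{lawil} gives
\[
\omega_A(TX\pm YS)\le\sqrt{\|TT^{\sharp_A}+S^{\sharp_A}S\|_A}\;\sqrt{\|X^{\sharp_A}X+YY^{\sharp_A}\|_A},
\]
and Proposition~\ref{immmmm} bounds the first radical by $\sqrt{\max\{\|T\|_A^2,\|S\|_A^2\}+\|ST\|_A}$, which is exactly the first factor in \eqref{ccdd}. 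So the whole matter reduces to establishing
\[
\|X^{\sharp_A}X+YY^{\sharp_A}\|_A\le 4\,\omega_{\mathbb{A}}^2\!\left[\begin{pmatrix}0&X\\Y&0\end{pmatrix}\right]-2\,c_A(YX).
\]

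To prove this I would first record a refinement of the right-hand side of \eqref{feki2020}: for every $R\in\mathbb{B}_A(\mathcal{H})$ and every $x\in\mathcal{H}$ with $\|x\|_A=1$,
\[
4\,\omega_A^2(R)\ \ge\ \|Rx\|_A^2+\|R^{\sharp_A}x\|_A^2+2\,\bigl|\langle R^2x,x\rangle_A\bigr|.
\]
This can be quoted from the cited improvements of \eqref{I.1.02}, or obtained directly: the operator $\mathrm{Re}_A(e^{i\theta}R):=\tfrac12\bigl(e^{i\theta}R+e^{-i\theta}R^{\sharp_A}\bigr)$ is $A$-self-adjoint and satisfies $\langle\mathrm{Re}_A(e^{i\theta}R)x,x\rangle_A=\mathrm{Re}\bigl(e^{i\theta}\langle Rx,x\rangle_A\bigr)$, so by \eqref{aself1} and taking suprema one gets $\omega_A(R)=\sup_{\theta}\|\mathrm{Re}_A(e^{i\theta}R)\|_A$; hence for a fixed unit $x$, $\omega_A^2(R)\ge\sup_{\theta}\|\mathrm{Re}_A(e^{i\theta}R)x\|_A^2$, and expanding the square, using $\langle Rx,R^{\sharp_A}x\rangle_A=\langle R^2x,x\rangle_A$, and maximising the cross term $2\,\mathrm{Re}\bigl(e^{2i\theta}\langle R^2x,x\rangle_A\bigr)$ over $\theta$ gives the displayed inequality.

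Next I would apply this refinement to the block operator $\mathbb{X}:=\begin{pmatrix}0&X\\Y&0\end{pmatrix}\in\mathbb{B}_{\mathbb{A}}(\mathcal{H}\oplus\mathcal{H})$, for which $\mathbb{X}^{\sharp_{\mathbb{A}}}=\begin{pmatrix}0&Y^{\sharp_A}\\X^{\sharp_A}&0\end{pmatrix}$ and $\mathbb{X}^2=\begin{pmatrix}XY&0\\0&YX\end{pmatrix}$. Choosing the test vector $x=(0,w)$ with $\|w\|_A=1$ (so $x$ is $\mathbb{A}$-unit) and using $\mathbb{X}(0,w)=(Xw,0)$, $\mathbb{X}^{\sharp_{\mathbb{A}}}(0,w)=(Y^{\sharp_A}w,0)$, $\langle\mathbb{X}^2(0,w),(0,w)\rangle_{\mathbb{A}}=\langle YXw,w\rangle_A$, together with $\langle(X^{\sharp_A}X+YY^{\sharp_A})w,w\rangle_A=\|Xw\|_A^2+\|Y^{\sharp_A}w\|_A^2$, the refinement gives
\[
4\,\omega_{\mathbb{A}}^2(\mathbb{X})\ \ge\ \langle(X^{\sharp_A}X+YY^{\sharp_A})w,w\rangle_A+2\,|\langle YXw,w\rangle_A|\ \ge\ \langle(X^{\sharp_A}X+YY^{\sharp_A})w,w\rangle_A+2\,c_A(YX).
\]
Taking the supremum over all $A$-unit $w$ and using that $X^{\sharp_A}X+YY^{\sharp_A}\geq_A 0$ (so its $A$-seminorm equals that supremum) yields the crux inequality; inserting it and the Proposition~\ref{immmmm} bound into the Lemma~\ref{lawil} estimate proves the first inequality of \eqref{ccdd}. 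The second inequality is then immediate: $\|ST\|_A\le\|S\|_A\|T\|_A\le\max\{\|T\|_A^2,\|S\|_A^2\}$, whence $\max\{\|T\|_A^2,\|S\|_A^2\}+\|ST\|_A\le 2\max\{\|T\|_A^2,\|S\|_A^2\}$, while $c_A(YX)\ge0$ gives $\omega_{\mathbb{A}}^2(\mathbb{X})-\tfrac12 c_A(YX)\le\omega_{\mathbb{A}}^2(\mathbb{X})$.

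I expect the only genuinely delicate points to be the refined numerical radius inequality of the second paragraph (the source of the $A$-Crawford correction), and the bookkeeping in the third: one must test against $(0,w)$ rather than $(w,0)$ so that $c_A(YX)$ — and not $c_A(XY)$ — survives, and one must pair the two radicals from Lemma~\ref{lawil} with the correct factors of \eqref{ccdd}. Everything else is a direct assembly of Lemma~\ref{lawil}, Proposition~\ref{immmmm}, and elementary seminorm estimates.
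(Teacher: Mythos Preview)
Your proof is correct and follows essentially the same route as the paper: apply Lemma~\ref{lawil}, bound the first radical with Proposition~\ref{immmmm}, bound the second via the inequality $\|X^{\sharp_A}X+YY^{\sharp_A}\|_A\le 4\,\omega_{\mathbb{A}}^2(\mathbb{X})-2\,c_A(YX)$, and finish the second line with the trivial estimates $\|ST\|_A\le\max\{\|T\|_A^2,\|S\|_A^2\}$ and $c_A(YX)\ge 0$. The only difference is that the paper simply \emph{quotes} that block lower bound as \eqref{T.2.16} from \cite{fpreprint03}, whereas you supply a self-contained proof of it via the pointwise refinement $4\omega_A^2(R)\ge\|Rx\|_A^2+\|R^{\sharp_A}x\|_A^2+2|\langle R^2x,x\rangle_A|$ applied to $\mathbb{X}$ at the test vector $(0,w)$; this is a pleasant extra but not a different strategy.
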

\begin{proof}
Notice first that, it was shown in \cite{fpreprint03} that
\begin{equation}\label{T.2.16}
\omega_{\mathbb{A}}\left[\begin{pmatrix}
0 &X\\
Y &0
\end{pmatrix}\right]\geq \frac{1}{2}\sqrt{{\big\|X^{\sharp_A}X+YY^{\sharp_A}\big\|}_A + 2c_A(YX)}.
\end{equation}
Now, by applying Lemma \ref{lawil} together with \eqref{omprovenew} we obtain
\begin{align*}
\omega_A(TX \pm YS)
&\leq \sqrt{{\big\|TT^{\sharp_A} + S^{\sharp_A} S\big\|}_A}\sqrt{{\big\|X^{\sharp_A} X+YY^{\sharp_A}\big\|}_A}\\
&\leq \sqrt{\max\left\{\|T\|_A^2,\|S\|_A^2\right\}+\|ST\|_A}\sqrt{{\big\|X^{\sharp_A} X+YY^{\sharp_A}\big\|}_A}\\
&\leq 2\sqrt{\max\left\{\|T\|_A^2,\|S\|_A^2\right\}+\|ST\|_A}\sqrt{\omega_\mathbb{A}^2\left[ \begin{pmatrix}
0&X\\
Y &0
\end{pmatrix}\right]-\frac{1}{2}c_A(YX) },
\end{align*}
where the last inequality follows from \eqref{T.2.16}. On the other hand, we see that
\begin{align*}
\|ST\|_A
&\leq \|T\|_A\|S\|_A \leq\frac{1}{2}\left(\|T\|_A^2+\|S\|_A^2\right)\leq \max\left\{\|T\|_A^2,\|S\|_A^2\right\}.
\end{align*}
This immediately proves \eqref{ccdd}.
\end{proof}
\begin{remark}
By replacing $S$ by $T$ and $X,Y$ by $S$ in \eqref{ccdd} and then using Lemma \ref{lemma1}(ii) we get
\begin{align*}
\omega_A(TS \pm ST)
& \leq 2\sqrt{2}{\|T\|}_A \omega_A(S).
\end{align*}
So, by changing the roles between $T$ and $S$ in the last inequality we reach \eqref{but}.
\end{remark}
For the rest of this paper, for any arbitrary operator $T\in {\mathcal B}_A({\mathcal H})$, we write
$$\Re_A(T):=\frac{T+T^{\sharp_A}}{2}\;\;\text{ and }\;\;\Im_A(T):=\frac{T-T^{\sharp_A}}{2i}.$$
Our next aim is to improve the inequality \eqref{but}. To do this we need the following lemma.
\begin{lemma}\label{m2}
Let $T\in \mathbb{B}_{A}(\mathcal{H})$ be such that $\omega_A(T)\leq 1$. Then, for every $x\in \mathcal{H}$ with $\|x\|_A=1$ we have
\begin{equation}
\|Tx\|_A^2+\|T^{\sharp_A}x\|_A^2\leq 4\left(1-\frac{\left|\,\|\Re_A(T)\|_A^2-\|\Im_A(T)\|_A^2 \right|}{2} \right).
\end{equation}
\end{lemma}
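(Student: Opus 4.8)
The plan is to split $T$ into its $A$-real and $A$-imaginary parts. Put $B:=\Re_A(T)$ and $C:=\Im_A(T)$, so that $T=B+iC$ and $T^{\sharp_A}=B-iC$ as genuine operator identities. The first step is to observe that $B$ and $C$ are $A$-selfadjoint: indeed $2AB=AT+AT^{\sharp_A}=AT+T^{*}A$ is selfadjoint and $2iAC=AT-AT^{\sharp_A}=AT-T^{*}A$ is skew-adjoint, hence $AB$ and $AC$ are both selfadjoint. By \eqref{aself1} this gives $\|B\|_A=\omega_A(B)$ and $\|C\|_A=\omega_A(C)$.

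The second step is to control these two seminorms. Using that $T^{\sharp_A}$ solves $AX=T^{*}A$, one gets for $x\in\mathcal H$ with $\|x\|_A=1$ that $\langle T^{\sharp_A}x,x\rangle_A=\langle T^{*}Ax,x\rangle=\overline{\langle Tx,x\rangle_A}$, whence $\langle Bx,x\rangle_A=\Re\langle Tx,x\rangle_A$ and $\langle Cx,x\rangle_A=\Im\langle Tx,x\rangle_A$. In particular $|\langle Bx,x\rangle_A|\le|\langle Tx,x\rangle_A|\le\omega_A(T)\le1$ and $|\langle Cx,x\rangle_A|\le1$, so taking suprema yields $\|B\|_A=\omega_A(B)\le1$ and $\|C\|_A=\omega_A(C)\le1$.

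The computational core is a parallelogram identity in the semi-inner product: applying $\|u+v\|_A^2+\|u-v\|_A^2=2\|u\|_A^2+2\|v\|_A^2$ with $u=Bx$, $v=iCx$ (and $\|iCx\|_A=\|Cx\|_A$) gives, for $\|x\|_A=1$,
$$\|Tx\|_A^2+\|T^{\sharp_A}x\|_A^2=\|Bx+iCx\|_A^2+\|Bx-iCx\|_A^2=2\|Bx\|_A^2+2\|Cx\|_A^2.$$
It then remains to bound $\|Bx\|_A^2+\|Cx\|_A^2$. Assume, without loss of generality, that $\|B\|_A\ge\|C\|_A$, so $\bigl|\,\|B\|_A^2-\|C\|_A^2\bigr|=\|B\|_A^2-\|C\|_A^2$. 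Using $\|Bx\|_A\le\|B\|_A$, $\|Cx\|_A\le\|C\|_A$ and $\|B\|_A\le1$,
$$\|Bx\|_A^2+\|Cx\|_A^2\le\|B\|_A^2+\|C\|_A^2=2\|B\|_A^2-\bigl(\|B\|_A^2-\|C\|_A^2\bigr)\le 2-\bigl|\,\|B\|_A^2-\|C\|_A^2\bigr|,$$
and multiplying by $2$ and substituting into the identity above gives the asserted inequality.

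I expect the only genuinely delicate point to be the packaging of Steps 1--2, namely checking that $\Re_A(T)$ and $\Im_A(T)$ are $A$-selfadjoint and that each has $A$-seminorm at most $1$, since without the latter bound one only obtains the trivial estimate $\|Bx\|_A^2+\|Cx\|_A^2\le 2$; the correct appearance of the subtracted defect term $\bigl|\,\|\Re_A(T)\|_A^2-\|\Im_A(T)\|_A^2\bigr|$ hinges precisely on combining the case distinction $\|B\|_A\ge\|C\|_A$ with $\|B\|_A\le1$. The rest of the argument is routine.
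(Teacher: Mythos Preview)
Your proof is correct, and it is more elementary than the paper's. The paper first passes to the operator level, bounding
\[
\|Tx\|_A^2+\|T^{\sharp_A}x\|_A^2=\langle (T^{\sharp_A}T+TT^{\sharp_A})x,x\rangle_A\le \|T^{\sharp_A}T+TT^{\sharp_A}\|_A,
\]
and then invokes an auxiliary operator-seminorm inequality (Lemma~\ref{ffii}, itself resting on a result from \cite{rout}) to get
\[
\|T^{\sharp_A}T+TT^{\sharp_A}\|_A\le 4\max\{\|\Re_A(T)\|_A^2,\|\Im_A(T)\|_A^2\}-2\bigl|\,\|\Re_A(T)\|_A^2-\|\Im_A(T)\|_A^2\bigr|,
\]
before using $\|\Re_A(T)\|_A,\|\Im_A(T)\|_A\le\omega_A(T)\le 1$ exactly as you do. Your parallelogram computation $\|Tx\|_A^2+\|T^{\sharp_A}x\|_A^2=2\|Bx\|_A^2+2\|Cx\|_A^2$ bypasses this detour entirely and works directly at the vector level; combined with $\|Bx\|_A\le\|B\|_A$, $\|Cx\|_A\le\|C\|_A$ it reproduces the same bound without any external citation. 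The paper's route has the advantage of isolating Lemma~\ref{ffii} as a result of independent interest, but for the lemma at hand your argument is shorter and self-contained.
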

In order to prove Lemma \ref{m2}, we first prove the following result.
\begin{lemqt}\label{ffii}
Let $T,S\in \mathbb{B}_{A}(\mathcal{H})$. Then,
\begin{align*}
\|T^{\sharp_A}T+TT^{\sharp_A}\|_A\leq 4\max\left\{\|\Re_A(T)\|_A^2 , \|\Im_A(T)\|_A^2\right\}-2\left|\,\|\Re_A(T)\|_A^2-\|\Im_A(T)\|_A^2 \right|.
\end{align*}
\end{lemqt}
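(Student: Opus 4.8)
The plan is to reduce everything to a statement about Hilbert space operators via the unitary-type equivalence $T\mapsto\widetilde T$ of Lemma \ref{alem2}, and then prove the resulting inequality for an arbitrary bounded operator $R$ on a Hilbert space, namely
\[
\|R^*R+RR^*\|\le 4\max\{\|\Re(R)\|^2,\|\Im(R)\|^2\}-2\bigl|\,\|\Re(R)\|^2-\|\Im(R)\|^2\,\bigr|,
\]
where $\Re(R)=(R+R^*)/2$ and $\Im(R)=(R-R^*)/2i$. Indeed, using Lemma \ref{alem2}(i)--(ii) together with \eqref{prosum}, each of the four $A$-seminorms appearing in the claim equals the corresponding $\mathbb{B}(\mathbf{R}(A^{1/2}))$-operator norm of the associated $\widetilde{\cdot}$ operator, and $\widetilde{T^{\sharp_A}}=(\widetilde T)^*$, so the semi-Hilbert statement follows at once from the Hilbert-space one applied to $R=\widetilde T$. (Note the hypothesis only uses $T$; the $S$ in the statement of Lemma \ref{ffii} is a harmless typo and plays no role.)

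For the Hilbert-space inequality, write $P=\Re(R)$ and $Q=\Im(R)$, so that $P,Q$ are selfadjoint and $R=P+iQ$, $R^*=P-iQ$. A direct expansion gives
\[
R^*R+RR^*=(P-iQ)(P+iQ)+(P+iQ)(P-iQ)=2P^2+2Q^2.
\]
Hence the left side is $2\|P^2+Q^2\|$. The right side, abbreviating $a=\|P\|^2=\|P^2\|$ and $b=\|Q\|^2=\|Q^2\|$, equals $4\max\{a,b\}-2|a-b|=2(a+b)$ (since $4\max\{a,b\}-2|a-b| = 2(a+b)$ for all real $a,b$). So the whole inequality collapses to the elementary fact
\[
\|P^2+Q^2\|\le\|P^2\|+\|Q^2\|,
\]
which is just the triangle inequality for the operator norm. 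Assembling: $\|T^{\sharp_A}T+TT^{\sharp_A}\|_A = \|(\widetilde T)^*\widetilde T+\widetilde T(\widetilde T)^*\| = 2\|\widetilde P^2+\widetilde Q^2\|\le 2(\|\widetilde P\|^2+\|\widetilde Q\|^2) = 2\|\Re_A(T)\|_A^2+2\|\Im_A(T)\|_A^2 = 4\max\{\cdot\}-2|\cdot|$, as desired.

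There is essentially no obstacle here: the only points requiring a little care are (i) checking that $\Re_A(T)$ and $\Im_A(T)$ lie in $\mathbb{B}_A(\mathcal H)$ and are $A$-selfadjoint, so that $\|\Re_A(T)\|_A^2=\|\Re_A(T)^2\|_A$ via \eqref{aself1} and likewise for $\Im_A(T)$ — this uses $(T+T^{\sharp_A})^{\sharp_A}$ being $A$-selfadjoint after composing with $P_{\overline{\mathcal R(A)}}$, and is standard; (ii) verifying $\widetilde{\Re_A(T)}=\Re(\widetilde T)$ and $\widetilde{\Im_A(T)}=\Im(\widetilde T)$, which follows from linearity of $T\mapsto\widetilde T$ in \eqref{prosum} and $\widetilde{T^{\sharp_A}}=(\widetilde T)^*$; and (iii) the purely numerical identity $4\max\{a,b\}-2|a-b|=2(a+b)$. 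I would present the proof by first recording these reductions, then the one-line expansion $R^*R+RR^*=2P^2+2Q^2$, and finally the triangle inequality, so the whole argument is short.
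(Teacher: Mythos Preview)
Your proof is correct but takes a genuinely different route from the paper's. The paper invokes an external inequality from \cite{rout},
\[
\|X^{\sharp_A}X+Y^{\sharp_A}Y\|_A \le \max\{\|X+Y\|_A^2,\|X-Y\|_A^2\}-\tfrac12\bigl|\,\|X+Y\|_A^2-\|X-Y\|_A^2\,\bigr|,
\]
and substitutes $X=(T^{\sharp_A})^{\sharp_A}$, $Y=T^{\sharp_A}$, then uses $\|R^{\sharp_A}\|_A=\|R\|_A$ to rewrite everything in terms of $\Re_A(T)$ and $\Im_A(T)$. You instead pass to the Hilbert space $\mathbf{R}(A^{1/2})$ via Lemma~\ref{alem2}, expand $R^*R+RR^*=2P^2+2Q^2$ with $P=\Re R$, $Q=\Im R$, observe the numerical identity $4\max\{a,b\}-2|a-b|=2(a+b)$, and finish with the bare triangle inequality $\|P^2+Q^2\|\le\|P\|^2+\|Q\|^2$. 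Your approach is more elementary and self-contained (no citation needed), and it makes transparent that the inequality is nothing but the triangle inequality plus an algebraic repackaging of the right-hand side; the paper's route is quicker once the cited lemma is in hand and situates the result within a more general family of bounds. One cosmetic point: in your ``care'' item (i) the appeal to \eqref{aself1} for $\|\Re_A(T)\|_A^2=\|\Re_A(T)^2\|_A$ is neither what \eqref{aself1} says nor what you actually use---your assembly only needs $\|\widetilde P\|=\|\Re_A(T)\|_A$ from Lemma~\ref{alem2}(i) and the selfadjointness of $\widetilde P$ in $\mathbf{R}(A^{1/2})$, which is exactly your point (ii).
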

\begin{proof}
Notice first that it was shown in \cite[Lemma 2.18]{rout} that
\begin{align}\label{j}
\|X^{\sharp_A}X+Y^{\sharp_A}Y\|_A
&\leq \max\left\{\|X+Y\|_A^2 , \|X-Y\|_A^2\right\}-\frac{\left|\,\|X+Y\|_A^2-\|X-Y\|_A^2 \right|}{2},
\end{align}
for every $X,Y\in \mathbb{B}_{A}(\mathcal{H})$. By replacing $X$ and $Y$ by $(T^{\sharp_A})^{\sharp_A}$ and $T^{\sharp_A}$ in \eqref{j} respectively we get
\begin{align*}
&\|(T^{\sharp_A}T+TT^{\sharp_A})^{\sharp_A}\|_A\\
&\leq \max\left\{\|(T^{\sharp_A}+T)^{\sharp_A}\|_A^2 , \|(T^{\sharp_A}-T)^{\sharp_A}\|_A^2\right\}-\frac{\left|\,\|(T^{\sharp_A}+T)^{\sharp_A}\|_A^2- \|(T^{\sharp_A}-T)^{\sharp_A}\|_A^2 \right|}{2}\\
&= \max\left\{\|T^{\sharp_A}+T\|_A^2 , \|T^{\sharp_A}-T\|_A^2\right\}-\frac{\left|\,\|T^{\sharp_A}+T\|_A^2- \|T^{\sharp_A}-T\|_A^2 \right|}{2}\\
&= 4\max\left\{\|\Re_A(T)\|_A^2 , \|\Im_A(T)\|_A^2\right\}-2\left|\,\|\Re_A(T)\|_A^2-\|\Im_A(T)\|_A^2 \right|.
\end{align*}
This shows the desired result since $\|R\|_A=\|R^{\sharp_A}\|_A$ for all $R\in \mathbb{B}_{A}(\mathcal{H})$.
\end{proof}
Now, we are ready to prove Lemma \ref{m2}.
\begin{proof}[Proof of Lemma \ref{m2}]
Let $x\in \mathcal{H}$ be such that $\|x\|_A=1$. By using the Cauchy-Schwarz inequality we see that
\begin{align*}
\|Tx\|_A^2+\|T^{\sharp_A}x\|_A^2
& =\langle(T^{\sharp_A} T+TT^{\sharp_A})x , x\rangle_A\\
 &\leq \omega_A\left(T^{\sharp_A} T+TT^{\sharp_A}\right)\\
& = \|T^{\sharp_A} T+TT^{\sharp_A}\|_A,
\end{align*}
where the last equality follows from \eqref{aself1} since $T^{\sharp_A} T+TT^{\sharp_A}\geq_A 0$. On the other hand, since $\Re_A(T)$ and $\Im_A(T)$ are $A$-selfadjoint operators, then by \eqref{aself1} we have
$$
\omega_A\Big(\Re_A(T)\Big)=\|\Re_A(T)\|_A\;\text{ and }\;\omega_A\Big(\Im_A(T)\Big)=\|\Im_A(T)\|_A.
$$
So, by applying Lemma \ref{ffii} it can be observed that
\begin{align*}
&\|T^{\sharp_A}T+TT^{\sharp_A}\|_A\\
&\leq 4\max\left\{\|\Re_A(T)\|_A^2 , \|\Im_A(T)\|_A^2\right\}-2\left|\,\|\Re_A(T)\|_A^2-\|\Im_A(T)\|_A^2 \right|\\
&= 4\max\left\{\omega_A^2\Big(\Re_A(T)\Big),\omega_A^2\Big(\Im_A(T)\Big)\right\}-2\left|\,\|\Re_A(T)\|_A^2-\|\Im_A(T)\|_A^2 \right|.
\end{align*}
On the other hand, it is not difficult to verify that
$$
\omega_A\Big(\Re_A(T)\Big)\leq \omega_A(T)\;\text{ and }\;\omega_A\Big(\Im_A(T)\Big)\leq \omega_A(T).
$$
This implies that
\begin{align*}
\|T^{\sharp_A}T+TT^{\sharp_A}\|_A
&\leq 4\omega_A^2(T)-2\left|\,\|\Re_A(T)\|_A^2-\|\Im_A(T)\|_A^2 \right|\\
&\leq 4\left(1-\frac{\left|\,\|\Re_A(T)\|_A^2-\|\Im_A(T)\|_A^2 \right|}{2} \right),
\end{align*}
where the last inequality follows since $\omega_A(T)\leq1$.
\end{proof}
Now, we are in a position to prove the following theorem.
\begin{theorem}\label{rm2}
Let $T,S,X,Y\in \mathbb{B}_{A}(\mathcal{H})$. Then
\begin{align}
&\omega_A(TXS\pm SYT)\nonumber\\
&\leq 2\sqrt{2}\|S\|_A\max\left\{\|X\|_A , \|Y\|_A\right\}\sqrt{\omega_A^2(T)-\frac{\left|\,\|\Re_A(T)\|_A^2-\|\Im_A(T)\|_A^2 \right|}{2}}\;.
\end{align}
\end{theorem}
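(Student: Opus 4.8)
The plan is to deduce the estimate from Lemma~\ref{lawil} applied to the splitting $T_1=TX$, $S_1=S_2=S$, $T_2=YT$; these four operators all lie in $\mathbb{B}_A(\mathcal{H})$ because a product of operators admitting $A$-adjoints again admits an $A$-adjoint. This yields at once
\[
\omega_A(TXS\pm SYT)\leq \sqrt{\big\|(TX)(TX)^{\sharp_A}+(YT)^{\sharp_A}(YT)\big\|_A}\;\sqrt{\big\|S^{\sharp_A}S+SS^{\sharp_A}\big\|_A},
\]
so the work splits into estimating the two square roots separately. The factor involving $S$ is immediate: by the triangle inequality and \eqref{I.1.01}, $\|S^{\sharp_A}S+SS^{\sharp_A}\|_A\leq\|S^{\sharp_A}S\|_A+\|SS^{\sharp_A}\|_A=2\|S\|_A^2$.

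For the other factor I would use $(TX)^{\sharp_A}=X^{\sharp_A}T^{\sharp_A}$ and $(YT)^{\sharp_A}=T^{\sharp_A}Y^{\sharp_A}$ to rewrite it as $TXX^{\sharp_A}T^{\sharp_A}+T^{\sharp_A}Y^{\sharp_A}YT$ and set $M:=\max\{\|X\|_A,\|Y\|_A\}$. The key point is the $A$-order comparison
\[
0\leq_A TXX^{\sharp_A}T^{\sharp_A}+T^{\sharp_A}Y^{\sharp_A}YT\leq_A M^2\big(TT^{\sharp_A}+T^{\sharp_A}T\big),
\]
which I would establish as follows: since $\|X^{\sharp_A}v\|_A\leq M\|v\|_A$ and $\|Yv\|_A\leq M\|v\|_A$ for every $v\in\mathcal{H}$, one has $M^2I-XX^{\sharp_A}\geq_A 0$ and $M^2I-Y^{\sharp_A}Y\geq_A 0$; now apply the map $R\mapsto TRT^{\sharp_A}$ to the first and $R\mapsto T^{\sharp_A}RT$ to the second. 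Both maps send $A$-positive operators to $A$-positive operators — the identity $AT^{\sharp_A}=T^*A$, valid for $T\in\mathbb{B}_A(\mathcal{H})$, is what makes this work — and adding the two resulting inequalities gives the displayed comparison. Since the two extreme members are $A$-positive, hence $A$-selfadjoint, taking $A$-operator seminorms and using \eqref{aself1} together with the fact that $0\leq_A B\leq_A C$ forces $\|B\|_A\leq\|C\|_A$ (because $\|B\|_A=\omega_A(B)=\sup_{\|x\|_A=1}\langle Bx,x\rangle_A$ when $B\geq_A 0$) gives $\|TXX^{\sharp_A}T^{\sharp_A}+T^{\sharp_A}Y^{\sharp_A}YT\|_A\leq M^2\|TT^{\sharp_A}+T^{\sharp_A}T\|_A$.

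Finally I would bound $\|TT^{\sharp_A}+T^{\sharp_A}T\|_A\leq 4\omega_A^2(T)-2\big|\,\|\Re_A(T)\|_A^2-\|\Im_A(T)\|_A^2\,\big|$; this is precisely the chain of inequalities produced inside the proof of Lemma~\ref{m2} (equivalently, Lemma~\ref{ffii} combined with $\|\Re_A(T)\|_A=\omega_A(\Re_A(T))\leq\omega_A(T)$ and the analogue for $\Im_A(T)$), and since both sides are homogeneous of degree two in $T$, the normalization $\omega_A(T)\leq 1$ occurring in Lemma~\ref{m2} is not needed. Substituting the three estimates into the inequality coming from Lemma~\ref{lawil}, pulling $\max\{\|X\|_A^2,\|Y\|_A^2\}$ and $2\|S\|_A^2$ out of the two square roots, and using $\sqrt{4a-2b}=2\sqrt{a-b/2}$ yields the claimed inequality. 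The only genuinely delicate step is the $A$-order comparison of the previous paragraph: one must check carefully that conjugation by $T$ and by $T^{\sharp_A}$ preserves $A$-positivity and that it is compatible with the $A$-operator seminorm; everything else is routine bookkeeping.
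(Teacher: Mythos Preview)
Your argument is correct, and it is a genuinely different route from the paper's proof. The paper does \emph{not} go through Lemma~\ref{lawil}: instead it first proves the three-operator bound
\[
\omega_A(TX\pm YT)\leq 2\sqrt{2}\max\{\|X\|_A,\|Y\|_A\}\sqrt{\omega_A^2(T)-\tfrac{1}{2}\big|\,\|\Re_A(T)\|_A^2-\|\Im_A(T)\|_A^2\,\big|}
\]
via a direct pointwise Cauchy--Schwarz estimate $|\langle(TX\pm YT)x,x\rangle_A|\leq \|T^{\sharp_A}x\|_A+\|Tx\|_A$ (under the normalizations $\omega_A(T)\leq 1$, $\|X\|_A,\|Y\|_A\leq 1$) combined with Lemma~\ref{m2}, then rescales, and only at the end substitutes $X\mapsto XS$, $Y\mapsto SY$ to insert the operator $S$. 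By contrast, you attack the full four-operator expression in one shot with Lemma~\ref{lawil}, then peel off the $S$-factor trivially and the $X,Y$-factor via the $A$-order comparison $TXX^{\sharp_A}T^{\sharp_A}+T^{\sharp_A}Y^{\sharp_A}YT\leq_A M^2(TT^{\sharp_A}+T^{\sharp_A}T)$, finishing with Lemma~\ref{ffii}. Your approach is slicker in that it avoids both the normalization/rescaling step and the repetition of a pointwise argument already hidden inside Lemma~\ref{lawil}; the paper's approach has the advantage of yielding the intermediate three-operator inequality above as a by-product, which is used again later (Theorem~\ref{impr2020}). The $A$-positivity-preservation step you flagged as delicate is indeed fine for the reasons you give.
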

\begin{proof}
Assume first that $\omega_A(T)\leq 1$, $\|X\|_A\leq 1$ and $\|Y\|_A\leq 1$. Let $x\in \mathcal{H}$ be such that $\|x\|_A=1$. By applying the Cauchy-Schwarz inequality we see that
\begin{align*}
|\langle (TX\pm YT)x, x\rangle_A|
&\leq |\langle Xx, T^{\sharp_A}x\rangle_A|+|\langle Tx, Y^{\sharp_A}x\rangle_A| \\
&\leq \|Xx\|_A\|T^{\sharp_A}x\|_A+\|Tx\|_A\|Y^{\sharp_A}x\|_A\\
&\leq \|X\|_A\|T^{\sharp_A}x\|_A+\|Tx\|_A\|Y^{\sharp_A}\|_A\\
&\leq \|T^{\sharp_A}x\|_A+\|Tx\|_A\\
&\leq \sqrt{2}\left(\|T^{\sharp_A}x\|_A^2+\|Tx\|_A^2\right)^{\frac{1}{2}}.
\end{align*}
So, by Lemma \ref{m2} we get
\begin{equation*}
|\langle (TX\pm YT)x, x\rangle_A|\leq 2\sqrt{2}\sqrt{1-\frac{\left|\,\|\Re_A(T)\|_A^2-\|\Im_A(T)\|_A^2 \right|}{2}}.
\end{equation*}
Thus, by taking the supremum over all $x\in \mathcal{H}$ with $\|x\|_A=1$ in the above inequality we get
\begin{equation}\label{3.6}
\omega_A(TX\pm YT)\leq 2\sqrt{2}\sqrt{1-\frac{\left|\,\|\Re_A(T)\|_A^2-\|\Im_A(T)\|_A^2 \right|}{2}}.
\end{equation}

Now, let $T,X,Y\in \mathbb{B}_{A}(\mathcal{H})$ be any operators. If $\max\left\{\|X\|_A , \|Y\|_A\right\}=0$ or $\omega_A(T)=0$, then obviously the desired result holds. Assume that $\omega_A(T)\neq0$ and $\max\left\{\|X\|_A,\|Y\|_A\right\}\neq0$. By replacing $T$, $X$ and $Y$ by $\frac{T}{\omega_A(T)}$, $\frac{X}{\max\left\{\|X\|_A,\|Y\|_A\right\}}$ and $\frac{Y}{\max\left\{\|X\|_A,\|Y\|_A\right\}}$ respectively in \eqref{3.6} we see that
\begin{align}\label{23.33}
&\omega_A(TX\pm YT)\nonumber\\
&\leq 2\sqrt{2}\max\left\{\|X\|_A , \|Y\|_A\right\}\omega_A(T)\sqrt{1-\frac{\left|\,\left\|\Re_A\left(\frac{T}{\omega_A(T)}\right)\right\|_A^2-\left\|\Im_A\left(\frac{T}{\omega_A(T)}\right)\right\|_A^2 \right|}{2}}\nonumber\\
 &=2\sqrt{2}\max\left\{\|X\|_A , \|Y\|_A\right\}\sqrt{\omega_A^2(T)-\frac{\left|\,\left\|\Re_A\left(T\right)\right\|_A^2-\left\|\Im_A\left(T\right)\right\|_A^2 \right|}{2}}.
\end{align}
By replacing $X$ and $Y$ by $XS$ and $SY$ respectively in the inequality \eqref{23.33}, we obtain
\begin{align*}
&\omega_A(TXS\pm SYT)\\
 &\leq 2\sqrt{2}\max\left\{\|XS\|_A , \|SY\|_A\right\}\sqrt{\omega_A^2(T)-\frac{\left|\,\|\Re_A\left(T\right)\|_A^2-\|\Im_A\left(T\right)\|_A^2 \right|}{2}}\\
  &\leq 2\sqrt{2}\|S\|_A\max\left\{\|X\|_A , \|Y\|_A\right\}\sqrt{\omega_A^2(T)-\frac{\left|\,\|\Re_A\left(T\right)\|_A^2-\|\Im_A\left(T\right)\|_A^2 \right|}{2}}.
\end{align*}
This proves the required result.
\end{proof}

The following result is an immediate consequence of Theorem \ref{rm2} and extends a recent result of Hirzallah and Kittaneh (see \cite{A.K.2}). Moreover, the obtained inequality considerably refine the inequality \eqref{but}.
\begin{theorem}\label{impr2020}
Let $T, S\in\mathbb{B}_{A}(\mathcal{H})$. Then
\begin{align*}
\omega_A(TS \pm ST) &\leq 2\sqrt{2}\min\Big\{f_A(T,S), f_A(S,T) \Big\}.
\end{align*}
where
$$f_A(X,Y)=\|Y\|_A\sqrt{\omega_A^2(X)-\frac{\left|\,\left\|\Re_A\left(X\right)\right\|_A^2-\left\|\Im_A\left(X\right)\right\|_A^2\right|}{2}}.$$
\end{theorem}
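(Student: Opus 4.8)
The plan is to obtain this statement as a direct specialization of Theorem~\ref{rm2}. If $A=0$, then $\|\cdot\|_A\equiv 0$ and $\omega_A(\cdot)\equiv 0$, so both sides vanish and there is nothing to prove; hence I would assume $A\neq 0$. First I would record two elementary facts: the identity operator $I$ belongs to $\mathbb{B}_A(\mathcal{H})$, since $\mathcal{R}(I^*A)=\mathcal{R}(A)\subseteq\mathcal{R}(A)$ so that Theorem~\ref{doug} applies with $I^{\sharp_A}=P_{\overline{\mathcal{R}(A)}}$; and $\|I\|_A=\sup_{\|x\|_A=1}\|x\|_A=1$ because $A\neq 0$.

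With these in hand, apply Theorem~\ref{rm2} with the choice $X=Y=I$. Since $TIS\pm SIT=TS\pm ST$ and $\max\{\|I\|_A,\|I\|_A\}=1$, the theorem yields at once
\[
\omega_A(TS\pm ST)\leq 2\sqrt{2}\,\|S\|_A\sqrt{\omega_A^2(T)-\tfrac{1}{2}\bigl|\,\|\Re_A(T)\|_A^2-\|\Im_A(T)\|_A^2\,\bigr|}=2\sqrt{2}\,f_A(T,S).
\]

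Next I would exploit the symmetry of the commutator and anticommutator. One has $\omega_A(TS+ST)=\omega_A(ST+TS)$ trivially, while $\omega_A(TS-ST)=\omega_A\bigl(-(ST-TS)\bigr)=\omega_A(ST-TS)$ because $\omega_A(\cdot)$ is a seminorm on $\mathbb{B}_{A^{1/2}}(\mathcal{H})\supseteq\mathbb{B}_A(\mathcal{H})$ and hence is absolutely homogeneous. Thus in both the $+$ and the $-$ cases $\omega_A(TS\pm ST)=\omega_A(ST\pm TS)$. Applying the inequality of the previous paragraph with the roles of $T$ and $S$ interchanged then gives $\omega_A(TS\pm ST)=\omega_A(ST\pm TS)\leq 2\sqrt{2}\,f_A(S,T)$. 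Combining the two bounds produces the asserted inequality with the minimum.

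I do not anticipate a genuine obstacle: all the analytic content is already packaged in Theorem~\ref{rm2} (and, beneath it, in Lemma~\ref{m2} and Lemma~\ref{ffii}), so this final theorem is essentially a corollary. The only points that call for a word of care are the degenerate case $A=0$, the verification that $I\in\mathbb{B}_A(\mathcal{H})$ with $\|I\|_A=1$, and the remark that the $A$-numerical radius is invariant under negating a commutator, which is what licenses the symmetric second bound.
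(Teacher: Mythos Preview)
Your proposal is correct and follows essentially the same route as the paper: specialize Theorem~\ref{rm2} with $X=Y=I$ to obtain $\omega_A(TS\pm ST)\le 2\sqrt{2}\,f_A(T,S)$, then swap the roles of $T$ and $S$ to get the second bound and take the minimum. The only difference is that you spell out the minor verifications ($A=0$, $I\in\mathbb{B}_A(\mathcal{H})$, $\|I\|_A=1$, and invariance of $\omega_A$ under negation) that the paper leaves implicit.
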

\begin{proof}
By letting $X=Y=I$ in Theorem \ref{rm2} we get
\begin{align}\label{q}
\omega_A(TS\pm ST)
&\leq 2\sqrt{2}\|S\|_A\sqrt{\omega_A^2(T)-\frac{\left|\,\|\Re_A(T)\|_A^2-\|\Im_A(T)\|_A^2 \right|}{2}}\;.
\end{align}
Now, by replacing $T$ and $S$ by $S$ and $T$ respectively in \eqref{q} we get the desired result.
\end{proof}

\begin{corollary}
Let $T\in\mathbb{B}_{A}(\mathcal{H})$. Then
\begin{align*}
\omega_A(T^2)\leq \sqrt{2}\|T\|_A\sqrt{\omega_A^2(T)-\frac{\left|\,\left\|\Re_A\left(T\right)\right\|_A^2-\left\|\Im_A\left(T\right)\right\|_A^2\right|}{2}}\;.
\end{align*}
\end{corollary}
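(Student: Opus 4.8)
The plan is to obtain this as the diagonal case of Theorem \ref{impr2020} (equivalently, of the intermediate inequality \eqref{q}). First I would put $S=T$ in Theorem \ref{impr2020}. Then $TS+ST=2T^2$, and the two quantities $f_A(T,S)$ and $f_A(S,T)$ both collapse to
\[
f_A(T,T)=\|T\|_A\sqrt{\omega_A^2(T)-\tfrac{1}{2}\big|\,\|\Re_A(T)\|_A^2-\|\Im_A(T)\|_A^2\big|},
\]
so the minimum is simply $f_A(T,T)$. This gives
\[
2\,\omega_A(T^2)=\omega_A(2T^2)=\omega_A(TS+ST)\le 2\sqrt{2}\,f_A(T,T).
\]
Dividing both sides by $2$ and inserting the displayed expression for $f_A(T,T)$ yields precisely the asserted inequality. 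One could equally well invoke \eqref{q} with $S=T$ in place of Theorem \ref{impr2020}; the manipulation is the same.

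I do not expect any genuine obstacle here: all the real work is already contained in Theorem \ref{rm2} (and behind it Lemma \ref{m2} and Lemma \ref{ffii}), and this corollary is just the specialization $S=T$ combined with the elementary identity $TS+ST=2T^2$. The only point I would bother to remark on is that the expression under the square root is nonnegative, which is clear since $\Re_A(T)$ and $\Im_A(T)$ are $A$-selfadjoint and $\omega_A(\Re_A(T))\le\omega_A(T)$, $\omega_A(\Im_A(T))\le\omega_A(T)$ (both facts used in the proof of Lemma \ref{m2}), so $\big|\,\|\Re_A(T)\|_A^2-\|\Im_A(T)\|_A^2\big|\le\omega_A^2(T)\le 2\,\omega_A^2(T)$ and the right-hand side is well-defined.
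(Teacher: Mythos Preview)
Your proposal is correct and matches the paper's own argument: the paper simply says the corollary follows immediately by letting $T=S$ in Theorem \ref{impr2020}, which is exactly what you do (using $TS+ST=2T^2$ and dividing by $2$). Your additional remark about well-definedness of the square root is a harmless bonus not present in the paper.
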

\begin{proof}
Follows immediately by letting $T=S$ in Theorem \ref{impr2020}.
\end{proof}
\begin{corollary}
Let $T, S\in\mathbb{B}_{A}(\mathcal{H})$ be such that $\omega_A(TS \pm ST)=2\sqrt{2}\|S\|_A\omega_A(T)$ and $AS\neq 0$. Then
\begin{equation}\label{fin}
\left\|\Re_A\left(T\right)\right\|_A=\left\|\Im_A\left(T\right)\right\|_A.
\end{equation}
\end{corollary}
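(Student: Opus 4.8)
The plan is to read off the conclusion from the chain of inequalities already established inside the proof of Theorem~\ref{impr2020}. Recall that \eqref{q} asserts, for all $T,S\in\mathbb{B}_A(\mathcal{H})$,
\[
\omega_A(TS\pm ST)\leq 2\sqrt{2}\,\|S\|_A\sqrt{\omega_A^2(T)-\tfrac{1}{2}\Bigl|\,\|\Re_A(T)\|_A^2-\|\Im_A(T)\|_A^2\,\Bigr|}\,;
\]
in particular the radicand on the right-hand side is nonnegative. The first thing I would check is that the hypothesis $AS\neq0$ forces $\|S\|_A\neq0$: if $\|S\|_A=0$, then $Sx\in\mathcal{N}(A)$ for every $x\in\mathcal{H}$, i.e.\ $ASx=0$ for all $x$, i.e.\ $AS=0$, contrary to assumption. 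Since $T\in\mathbb{B}_A(\mathcal{H})\subseteq\mathbb{B}_{A^{1/2}}(\mathcal{H})$ we also have $\omega_A(T)<\infty$, so every quantity in sight is finite.

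Next I would combine the equality hypothesis $\omega_A(TS\pm ST)=2\sqrt{2}\,\|S\|_A\,\omega_A(T)$ with \eqref{q} and cancel the strictly positive factor $2\sqrt{2}\,\|S\|_A$ to obtain
\[
\omega_A(T)\leq\sqrt{\omega_A^2(T)-\tfrac{1}{2}\Bigl|\,\|\Re_A(T)\|_A^2-\|\Im_A(T)\|_A^2\,\Bigr|}\,.
\]
Both sides being nonnegative, squaring yields $\bigl|\,\|\Re_A(T)\|_A^2-\|\Im_A(T)\|_A^2\,\bigr|\leq 0$, hence this absolute value equals $0$, hence $\|\Re_A(T)\|_A^2=\|\Im_A(T)\|_A^2$, and therefore \eqref{fin} follows upon taking square roots.

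There is no genuinely hard step here: the argument is just an equality-case analysis of \eqref{q}. The only points requiring a moment of care are the implication $AS\neq0\Rightarrow\|S\|_A\neq0$, which legitimizes cancelling $\|S\|_A$, and the observation that the expression under the square root in \eqref{q} is nonnegative so that squaring preserves the inequality; both are immediate. One could alternatively run the symmetric argument using the bound $\omega_A(TS\pm ST)\leq 2\sqrt{2}\,f_A(S,T)$ from Theorem~\ref{impr2020}, but since the hypothesis is phrased with $\|S\|_A\,\omega_A(T)$ it is cleanest to invoke \eqref{q} directly.
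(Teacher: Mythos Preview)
Your argument is correct and follows essentially the same route as the paper: both invoke \eqref{q}, use $AS\neq 0\Rightarrow\|S\|_A\neq 0$ to cancel, and conclude that the absolute-value term vanishes. The only cosmetic difference is that the paper first writes the full chain $\omega_A(TS\pm ST)\leq 2\sqrt{2}\,\|S\|_A\sqrt{\cdots}\leq 2\sqrt{2}\,\|S\|_A\,\omega_A(T)$ and then forces equality in the middle, whereas you substitute the hypothesis directly into \eqref{q} and square; the content is the same.
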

\begin{proof}
It follows from Theorem \ref{impr2020} that
\begin{align*}
\omega_A(TS\pm ST)
&\leq 2\sqrt{2}\|S\|_A\sqrt{\omega_A^2(T)-\frac{\left|\,\|\Re_A(T)\|_A^2-\|\Im_A(T)\|_A^2 \right|}{2}}\;\\
&\leq 2\sqrt{2}\|S\|_A\omega_A(T).
\end{align*}
So, since $\omega_A(TS \pm ST)=2\sqrt{2}\|S\|_A\omega_A(T)0$, then
\begin{align*}
2\sqrt{2}\|S\|_A\sqrt{\omega_A^2(T)-\frac{\left|\,\|\Re_A(T)\|_A^2-\|\Im_A(T)\|_A^2 \right|}{2}}=2\sqrt{2}\|S\|_A\omega_A(T).
\end{align*}
Since $AS\neq 0$, then $\|S\|_A\neq 0$. This immediately proves \eqref{fin} as desired.
\end{proof}

\end{document}